\title[Critical dynamics on $\operatorname{Per}_n(\lambda)$]{Critical orbits of polynomials with a periodic point of specified multiplier}
\author{Patrick Ingram}
\date{\today}
\address{York University, 4700 Keele St., Toronto, Canada}
\email{pingram@yorku.ca}
\subjclass[2010]{37P30 (Primary) 37P45  (Secondary)}
\thanks{The author would like to thank Laura DeMarco for helpful comments on an earlier version.}
\newcommand{\QQ}{\mathbb{Q}}
\newcommand{\ZZ}{\mathbb{Z}}
\newcommand{\CC}{\mathbb{C}}
\newcommand{\RR}{\mathbb{R}}
\newcommand{\PP}{\mathbb{P}}
\renewcommand{\AA}{\mathbb{A}}
\newcommand{\ord}{\operatorname{ord}}
\renewcommand{\phi}{\varphi}
\renewcommand{\epsilon}{\varepsilon}
\newcommand{\moduli}[1]{\mathsf{M}_{#1}}			
\newcommand{\polym}[1]{\mathsf{P}_{#1}}				
\newcommand{\per}[2]{\operatorname{Per}_{#1}({#2})}
\newtheorem{Theorem}{Theorem}
\newtheorem{Lemma}[Theorem]{Lemma}
\newtheorem{Corollary}[Theorem]{Corollary}
\theoremstyle{remark}
\newtheorem{Remark}{Remark}
\theoremstyle{definition}
\begin{document}
\begin{abstract}
Answering a question posed by Adam Epstein, we show that the collection of conjugacy classes of polynomials admitting a parabolic fixed point and at most one infinite critical orbit is  a set of bounded height in the relevant moduli space. We also apply the methods over function fields to draw conclusions about algebraically parametrized families, and prove an analogous result for quadratic rational maps.
\end{abstract}

\maketitle

\section{Introduction}

The orbits of critical points, and their relation to local behaviour at fixed points, has long been a subject of interest in holomorphic dynamics. The collection of rational functions of a given degree (modulo change of coordinates and ignoring the Latt\`{e}s examples) with all critical orbits finite turns out to be a set of bounded height, a fact conjectured by Silverman~\cite{barbados} and proven by Benedetto, the author, Jones, and Levy \cite{pcfrat} (see also \cite{epstein, pcfpoly, hcrit, alon, js:thurston}).  Silverman's conjecture was motivated in part by Thurston's rigidity result for families of post-critically finite rational functions, and Epstein pointed out to the author that there are other, related rigidity results that might suggest similar arithmetic conjectures. In particular, 
Epstein asked  whether the set of polynomials with a parabolic fixed point and at most one infinite critical orbit is a set of bounded height. In this note, we show that it is.

Let $d\geq 2$, let $\polym{d}$ be the moduli space of polynomials of degree $d$, modulo change of coordinates,  let $h$ be any ample Weil height on $\polym{d}$, and let $\hat{h}_f$ be the canonical height associated to $f$. For $\lambda\in \overline{\QQ}^\times$, let $\per{n}{\lambda}\subseteq \polym{d}$ be the collection of polynomials admitting a point of period $n$ with multiplier $\lambda$.
We remind the reader of the definition of independence below, but on first reading it suffices to note that independent critical points are certainly distinct.

\begin{Theorem}\label{th:main}
For any $d\geq 2$ and $n\geq 1$ there exist constants $\epsilon>0$, $A$, and $B$, such that  for any $\lambda\in \overline{\QQ}^\times$ and any  $f\in \per{n}{\lambda}\subseteq \polym{d}$, either \[h(f)\leq Ah(\lambda)+B\] or else $f$ has independent critical points $c_1$ and $c_2$ such that
\[\min\{\hat{h}_f(c_1), \hat{h}_f(c_2)\}> \epsilon h(f).\]
\end{Theorem}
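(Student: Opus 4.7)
I would approach the proof by seeking a quantitative inequality on $\per{n}{\lambda}$ of the form
\[h(f) \leq A_0\, h(\lambda) + A_1\, M_2(f) + O(1),\]
where $M_2(f)$ denotes the second-largest canonical height $\hat{h}_f(c_i)$ among the critical points $c_i$ of $f$, and the implied constants depend on $d$ and $n$ (and the choice of $h$). Given such a bound, the theorem follows cleanly: if we cannot find two critical points with $\hat{h}_f(c_i) > \epsilon h(f)$, then $M_2(f) \leq \epsilon h(f)$, and the inequality reduces to $(1 - A_1 \epsilon) h(f) \leq A_0 h(\lambda) + O(1)$, which for small enough $\epsilon$ yields a linear bound $h(f) \leq A h(\lambda) + B$.

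The first ingredient is the polynomial critical-height comparison
\[\sum_{i=1}^{d-1} \hat{h}_f(c_i) = h(f) + O(1)\]
on $\polym{d}$, of the type developed by the author in \cite{hcrit}. This already guarantees that at least one critical point carries canonical height comparable to $h(f)/(d-1)$, so the substantive point is showing that a \emph{single} active critical orbit cannot absorb all of $h(f)$ unless $h(\lambda)$ is comparably large. Intuitively, the periodic cycle of multiplier $\lambda$ forces one critical orbit to be dynamically tied to that cycle at every place where $|\lambda|$ is small, and the quantitative local estimates should show that this tied critical orbit contributes only $O(h(\lambda))$ to the total critical height; subtracting this contribution from the above identity isolates the remaining critical heights and forces a second one to be large.

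The strategy for establishing the main inequality would follow the function-field reduction used in \cite{pcfrat, pcfpoly}: if the inequality fails uniformly, one extracts a non-isotrivial family $f_t/K$ over a function field $K$ with $f_t \in \per{n}{\lambda_t}$ such that all but at most one critical point is $K$-preperiodic and $h(f_t)/h(\lambda_t) \to \infty$. The goal is to derive a contradiction by comparing, at each place, the adelic measure supported on the active critical orbit with the measure arising from the multiplier map $\sigma_n$ restricted to $\per{n}{\lambda}$. A rigidity/equidistribution argument, in the spirit of bifurcation-current techniques, should force the family to be isotrivial or to have $h(f_t)$ controlled by $h(\lambda_t)$, from which the arithmetic inequality on $\overline{\QQ}$ follows by a standard specialization argument.

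The hard part will be this rigidity statement on the codimension-one subvariety $\per{n}{\lambda} \subset \polym{d}$: earlier incarnations of the method work on full moduli spaces, whereas here one must analyze the bifurcation theory on a hypersurface whose geometry—especially at boundary strata and at parameters where the periodic point collides with a critical point—can be delicate. A careful examination of the defining equations of $\per{n}{\lambda}$ in moduli coordinates, together with uniform local estimates relating $\hat{h}_f(c)$ to $h(\lambda)$ for a critical point $c$ in the basin of the $\lambda$-cycle, should suffice. Finally, to upgrade ``two critical points with large canonical height'' to two \emph{independent} such critical points, I would invoke a generic-type reduction: if two candidates fail independence (orbits collide, or they are related by an automorphism of $f$), they can be identified and the count adjusted, reducing to finitely many exceptional configurations that admit a direct check.
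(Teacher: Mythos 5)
Your target inequality $h(f)\leq A_0\,h(\lambda)+A_1\,M_2(f)+O(1)$ is indeed equivalent to the dichotomy in the theorem (up to the independence issue), and your guiding intuition---that on $\per{n}{\lambda}$ a single critical orbit cannot absorb essentially all of $h(f)$ unless $h(\lambda)$ is comparably large---is the right one. But the argument you propose for establishing that inequality has a fatal gap: there is no compactness principle that converts a sequence of number-field counterexamples with $h(f_t)/h(\lambda_t)\to\infty$ into a non-isotrivial algebraic family over a function field. Specialization arguments run in the opposite direction (from a family to its fibres), and the papers you cite for this reduction do not argue this way at all: they prove explicit local inequalities at each place of a number field and sum them. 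Without a direct quantitative proof of the main inequality, the equidistribution/rigidity step you sketch has nothing to act on, and in any case such techniques tend to yield finiteness statements rather than uniform height bounds with explicit dependence on $\lambda$.

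The two concrete ingredients you are missing are the following. First, the local mechanism: writing $f=f_{\mathbf{c}}$ in a normal form whose critical points are $c_1,\dots,c_{d-1}$ and whose fixed point at $0$ has multiplier $(-1)^{d-1}c_1\cdots c_{d-1}=\lambda$, one shows via the Nullstellensatz applied to the critical values that at any place $v$ where the height-maximal critical point $c_1$ satisfies $5\log|c_1|_v<\log\|\mathbf{c}\|_v$, some critical point \emph{independent} of $c_1$ has local escape rate at least $\log^+\|\mathbf{c}\|_v-O(1)$. This is where ``a second critical point must be large'' is made precise; it is not that the orbit tied to the $\lambda$-cycle contributes only $O(h(\lambda))$ to the critical height, a claim you do not justify and which the proof does not need or establish. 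Second, the global mechanism: the relation $c_1\cdots c_{d-1}=\pm\lambda$ together with the product formula shows that the set $S$ of such places satisfies
\[\sum_{v\in S}\frac{[K_v:\QQ_v]}{[K:\QQ]}\log^+\|\mathbf{c}\|_v\geq \frac{1}{5d-9}h(\mathbf{c})-\frac{5d-4}{5d-9}h(\lambda),\]
so that summing the local bounds over $S$ and using non-negativity of the escape rates elsewhere gives the theorem (after reducing $n$ to $1$ by passing to $f^n\in\per{1}{\lambda^n}$). Two further points: the identity $\sum_i\hat{h}_f(c_i)=h(f)+O(1)$ you invoke overstates the known commensurability $\hat{h}_{\mathrm{crit}}\asymp h_{\polym{d}}$; and your treatment of independence (``finitely many exceptional configurations'') is inadequate, since dependence among critical points occurs in positive-dimensional families---the correct device is to work with dependence classes throughout, choosing $c_1$ maximal in its class so that the local lemma produces a critical point outside that class.
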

In light of the relation between the moduli height and the critical height established in \cite{pcfpoly, hcrit}, one may view Theorem~\ref{th:main} as showing that on $\per{n}{\lambda}$, no single critical point accounts for almost all of the critical height of the polynomial $f$. For polynomials in general, of course, this claim is false (for example the unicritical families).

Theorem~\ref{th:main} answers the question asked by Epstein (other variants remain open).

\begin{Corollary}
The set of $f\in \polym{d}$ with a parabolic fixed point and fewer than two independent, infinite critical orbits is a set of bounded height. Consequently, over a given number field $K$ there are only finitely many polynomials of degree $d$ with a parabolic fixed point and fewer than two independent, infinite critical orbits.
\end{Corollary}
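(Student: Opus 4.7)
The plan is to derive the corollary as an essentially immediate consequence of Theorem~\ref{th:main} applied with $n=1$, combined with a basic Northcott argument. First I would observe that a parabolic fixed point is, by definition, a fixed point whose multiplier $\lambda$ is a root of unity; such a $\lambda$ lies in $\overline{\QQ}^\times$ and has $h(\lambda)=0$. Thus the locus of polynomials with a parabolic fixed point is contained in $\bigcup_{\zeta} \per{1}{\zeta}$, where $\zeta$ ranges over roots of unity.

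Next I would invoke Theorem~\ref{th:main} with $n=1$, obtaining constants $\epsilon, A, B$ depending only on $d$. For $f$ in the parabolic locus, the theorem produces a root of unity $\zeta$ with $f\in \per{1}{\zeta}$, and since $h(\zeta)=0$, the dichotomy reduces to: either
\[ h(f) \leq B, \]
or else $f$ has independent critical points $c_1, c_2$ with $\min\{\hat{h}_f(c_1),\hat{h}_f(c_2)\} > \epsilon h(f)$. In the latter case, if $h(f)>0$ then both canonical heights are strictly positive, so neither $c_1$ nor $c_2$ is preperiodic, and hence both critical orbits are infinite. (The case $h(f)=0$ is already within the bound.) Taking the contrapositive, if $f$ has a parabolic fixed point and fewer than two independent infinite critical orbits, then $h(f)\leq B$, proving that this set is of bounded height in $\polym{d}$.

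For the second sentence I would invoke Northcott's theorem: since $h$ is an ample Weil height on the quasi-projective variety $\polym{d}$, the set of $K$-points of bounded $h$-height is finite for any number field $K$, so only finitely many $f\in \polym{d}(K)$ satisfy the stated hypotheses.

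The only subtlety worth flagging, rather than a genuine obstacle, is the translation between ``infinite critical orbit'' and ``positive canonical height,'' which relies on the standard fact that $\hat{h}_f(c)=0$ if and only if $c$ is preperiodic under $f$; beyond that the corollary is a direct unpacking of Theorem~\ref{th:main} at $n=1$.
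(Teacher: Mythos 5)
Your proposal is correct and matches the paper's (implicit) derivation: the corollary is exactly Theorem~\ref{th:main} with $n=1$ and $\lambda$ a root of unity, using $h(\lambda)=0$ and the uniformity of the constants in $\lambda$, the equivalence of positive canonical height with infinite orbit, and Northcott for the finiteness claim. The paper offers no separate proof, and your unpacking supplies precisely the missing details.
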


Before proceeding, we make a few remarks on the proof. The proof in~\cite{pcfpoly} that the collection of PCF polynomials is a set of bounded height is essentially local, constructing an inequality at each place of a  number field, and concluding the main result simply by summing those inequalities. The analogous result for rational functions~\cite{pcfrat} is similarly local, as is proof of the stronger result that the critical height is commensurate to an ample Weil height on the moduli space~\cite{hcrit} (although in this case the local inequality is not quite a local version of the global inequality, due to some extra terms which vanish when summing over all places).

The main result of this note starts with the same idea as in~\cite{pcfpoly}, namely that in any absolute value, the trivial upper bound on the size of branch points relative to critical points is more-or-less sharp, and so once the critical points are large enough, some branch point is so large as to easily escape to infinity under iteration. Given a critical point $c$ with an infinite orbit, and an absolute value in which $c$ is much smaller than the largest critical point, the argument in fact produces a critical point \emph{other than $c$} which must escape to infinity. This results in a local lower bound on escape rates for critical points excluding $c$, but one that holds only at certain places. The argument is completed not by understanding what happens in the remaining absolute values, but simply by showing that the condition $f\in \per{n}{\lambda}$ ensures that the absolute values in which this argument goes through contribute a positive proportion of $h(f)$.

Just as with the results in \cite{pcfpoly}, the arguments in the present note can be implemented over function fields of irreducible varieties over algebraically closed fields of characteristic 0 or $p>d$, and in this context many of the in-principle-effective constants end up vanishing. 

\begin{Theorem}\label{th:geom}
Let $k$ be an algebraically closed field of characterstic 0 or $p>d$, let $U/k$ be an irreducible quasi-projective variety, fix $\lambda\in k^\times$ and $n\geq 1$, and let \[f:U\to \per{n}{\lambda}\subseteq \polym{d}\] be a non-constant parametrized family of polynomials. Then  $f$ has two generical independent and infinite critical orbits (defined on some extension of the base).
\end{Theorem}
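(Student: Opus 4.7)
The plan is to apply the proof of Theorem~\ref{th:main} with the number field replaced by the function field $K = k(U)$, invoking the remark preceding the theorem that the additive constants in the relevant local estimates vanish in this setting. Granting the function-field analogue, the conclusion is immediate: since $\lambda \in k^\times$ is constant we have $h_K(\lambda) = 0$, while non-constancy of $f : U \to \polym{d}$ forces $h_K(f) > 0$, so the bounded-height alternative $h(f) \leq A h(\lambda) + B$ (with $B = 0$) is ruled out. The other alternative then delivers two independent critical points $c_1, c_2$, defined over a finite extension of $K$ (that is, on a finite cover of $U$), with $\hat{h}_f(c_i) \geq \epsilon h_K(f) > 0$. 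Strict positivity of the canonical height over a function field means the orbit of $c_i$ is non-preperiodic at the generic point of $U$, which is precisely the assertion that the critical orbit is generically infinite.

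The substantive task is therefore to trace through the proof of Theorem~\ref{th:main} and confirm that, over $k(U)$ under the characteristic hypothesis, every additive constant produced by a local estimate is zero. As outlined in the paragraphs just before Theorem~\ref{th:geom}, that proof consists of (i) a non-archimedean local inequality at each place $v$, showing that once the critical set of $f$ is large at $v$ some branch point is even larger and therefore escapes to infinity under iteration, forcing a positive contribution to $\hat{h}_f(c')$ for some critical point $c' \neq c$; and (ii) a global argument that the places where this mechanism fires contribute a positive proportion of $h(f)$, using $f \in \per{n}{\lambda}$ to pin down the $v$-adic picture near the parabolic point. Both steps are essentially resultant-theoretic; over a function field every place is non-archimedean, and under the hypothesis that the characteristic is $0$ or exceeds $d$, the resultant, discriminant, and multiplier identities driving the argument take the same form as in characteristic zero, so the additive constants arising in the number field case from archimedean correction terms and from logarithms of $d$ collapse to zero.

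The main obstacle is the bookkeeping in step (ii): one must verify that the estimate bounding the weight of \emph{productive} places from below by a fixed fraction of $h(f)$ survives the passage to function fields with zero additive slack. The hypothesis $p > d$ is essential here, as it guarantees that $f'$ has the generic number of zeros (so the critical set and the local escape estimate behave correctly), and that the polynomials cutting out $\per{n}{\lambda}$ retain their expected degrees as $f$ varies in the family. Once these degree counts are in place, the proportion-of-the-height estimate is a formal consequence of comparing the height of $\lambda$ (which is zero) with the degree of the multiplier polynomial along $U$, and the theorem follows without further analytic input.
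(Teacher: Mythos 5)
Your overall strategy is the paper's: the proof of Theorem~\ref{th:geom} runs through Lemma~\ref{lem:cink}, which is exactly the function-field specialization of Lemmas~\ref{lem:localindep} and~\ref{lem:Sweight} with every additive constant equal to zero, and the conclusion follows because $h(\lambda)=0$ for $\lambda\in k^\times$ while a non-constant (hence non-isotrivial) family has $h(\mathbf{c})>0$ by Lemma~\ref{lem:ffheights}. Two corrections to your supporting discussion, though. The constants vanish not for resultant-theoretic reasons but simply because every place of $k(U)$ furnished by Lemma~\ref{lem:ffheights} is non-archimedean and non-$p$-adic for all $p\leq d$, so that $R=\ZZ[\frac{1}{2},\dots,\frac{1}{d}]$ is $v$-integral at every place; this, together with the availability of the normal form~\eqref{eq:normalform}, is the only role of the characteristic hypothesis. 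And the role of the condition $f\in\per{1}{\lambda}$ is that $c_1\cdots c_{d-1}=\pm\lambda$ combined with the product formula forces the set $S$ of places where the local lemma fires to support a positive fraction of $h(\mathbf{c})$; nothing about a parabolic point is used, since $\lambda$ is an arbitrary element of $k^\times$.

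The one genuine gap is the reduction from general $n$ to $n=1$. You propose to rerun the proof of Theorem~\ref{th:main} wholesale, but Lemma~\ref{lem:ntoone} converts information about $f^n$ into information about $f$ via $h(f)\ll\hat{h}_{\mathrm{crit}}(f)=\frac{1}{n}\hat{h}_{\mathrm{crit}}(f^n)\ll h(f^n)$, quoting the main theorem of~\cite{pcfpoly}, which is a number-field statement. Its function-field analogue with zero additive constant amounts to the assertion that $\hat{h}_{\mathrm{crit}}(g)=0$ implies $g$ is isotrivial, and in this paper that assertion is itself deduced from (the $n=1$ case of) Theorem~\ref{th:geom}, so invoking it here would be circular. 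The paper sidesteps this by replacing that step with the separate fact that $f^n$ isotrivial implies $f$ isotrivial (Benedetto, Baker); the transfer of independence of critical orbits from $f^n$ to $f$ in Lemma~\ref{lem:ntoone} is characteristic-free and carries over as you assume. With that substitute input, the rest of your sketch goes through.
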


The statement of this result is motivated in part by an ``unlikely intersections'' result of Baker and DeMarco \cite[Theorem~1.2]{bd2}, which shows that in certain families of polynomials over $\CC$ with two independent, infinite critical orbits on the generic fibre, there are only finitely many post-critically finite specializations. Unfortunately, the conditions of \cite[Theorem~1.2]{bd2} (specifically the condition that the critical points are rational on $U=\AA^1$) are such that we are unable to combine that result with Theorem~\ref{th:geom} to conclude finiteness of PCF points on curves in $\per{n}{\lambda}$, for $\lambda\neq 0$, although further results along the lines of \cite{bd2} may allow such an application. We note that the case $n=1$ and $d=3$ of Theorem~\ref{th:geom}, with $k=\CC$, is already apparent in~\cite{bd2}, and was extended by Favre and Gauthier~\cite{fg} to $\per{n}{\lambda}\subseteq \polym{3}$ for arbitrary $n\geq 1$.

Of course, one would like to extend Theorem~\ref{th:main} from polynomials to rational functions, but at the moment we are able to establish this only for quadratic morphisms with marked fixed points. Note that heights of critical points in this setting have already been studied much more deeply by DeMarco, Wang, and Xe~\cite{dwx}, who also obtained lower bounds on the canonical heights of both critical points on $\per{1}{\lambda}$.  The focus there was on equidistribution applications, requiring significantly more detail about the local heights, and not on uniformity in $\lambda$.
\begin{Theorem}\label{th:quad}
There exist constants $\epsilon>0$, $A$, and $B$ such that for any $\lambda\in \overline{\QQ}^\times$ and $f\in \per{1}{\lambda}\subseteq \mathsf{M}_2$, we have either
\[h(f)\leq Ah(\lambda)+B,\]
or else the critical points $c_1,
 c_2$ of $f$ satisfy
\[\min\{\hat{h}_f(c_1), \hat{h}_f(c_2)\}\geq \epsilon h(f).\]
\end{Theorem}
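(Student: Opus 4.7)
The plan is to adapt the local-to-global strategy of Theorem~\ref{th:main}: establish a local inequality at each place of a number field of definition, and exploit the constraint $f\in\per{1}{\lambda}$ to ensure that the summed error remains proportional to $h(\lambda)$.

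As a first step, I would fix an explicit parametrization of the rational curve $\per{1}{\lambda}\subseteq\mathsf{M}_2$. Via Milnor's multiplier coordinates $\mathsf{M}_2\cong\AA^2$, the curve $\per{1}{\lambda}$ (for $\lambda\neq 0$) is a line parametrized by $t=\sigma_1$, along which the remaining two fixed-point multipliers are rational in $t$ and $\lambda$, so that $h(f)=h(t)+O(h(\lambda))$ and it suffices to control $h(t)$. I would work in a global normal form placing the $\lambda$-fixed point at infinity, for instance
\[ f_t(z)=\frac{az^2+bz+c}{\lambda z+d}, \]
with $a,b,c,d$ rational in $t$ and $\lambda$, which makes the two critical points $c_1(t),c_2(t)$ and their critical values explicit rational functions of $t$ and $\lambda$.

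Next, I would perform the local analysis. For each critical point $c_i$ individually, I aim to establish at each place $v$ an inequality of the form
\[ m_v\;\le\; C\,\h_{f,v}(c_i)+E_v, \]
where $m_v$ is the local moduli-height contribution, $\h_{f,v}$ is the local canonical height at $v$, and $\sum_v E_v\le Ah(\lambda)+B$. The mechanism, following~\cite{pcfpoly}, is that the trivial upper bound on critical values in terms of critical points is essentially sharp: once $m_v$ is large, the coefficients of $f_t$ are locally large at $v$, which in the quadratic normal form forces \emph{both} critical values to be large, and hence both critical points to escape at a proportional local rate. The role of the marked fixed point is to control the multiplicative constants uniformly, with residual error absorbed into $E_v$. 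Summed over all places, this yields $h(f)\le C\h_f(c_i)+Ah(\lambda)+B$ for each $i=1,2$, from which the desired min bound follows.

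The main obstacle I anticipate is the local argument at places where $|\lambda|_v$ is close to $1$: unlike the polynomial setting, where $\infty$ is universally super-attracting and supplies a canonical B\"ottcher coordinate, here the marked fixed point can be arbitrarily close to neutral, so standard quantitative escape estimates degenerate. The detailed local analysis in~\cite{dwx} provides the right framework but was carried out for fixed $\lambda$; making those estimates uniform in $\lambda$, and in particular verifying the ``both critical values large'' dichotomy above so that each critical point individually controls the local moduli height, is the crux of the argument.
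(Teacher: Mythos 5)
Your global strategy---a normal form with marked fixed points, local estimates summed over the places of a number field, with the constraint $f\in\per{1}{\lambda}$ absorbing the dependence on $\lambda$---is the right one, but the local step as you describe it has a genuine gap. You propose an inequality $m_v\le C\,\hat{h}_{f,v}(c_i)+E_v$ valid at \emph{every} place, with $\sum_v E_v\ll h(\lambda)$, on the grounds that a large local moduli contribution forces both critical values, hence both critical orbits, to escape locally at a proportional rate. No such place-by-place bound exists with errors summing to $O(h(\lambda))$: at an individual place a critical point can perfectly well lie in a bounded or recurrent region even when the coefficients are $v$-adically large, and the set of such places can carry a definite positive fraction of $h(f)$, not merely $O(h(\lambda))$. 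What the paper does instead (here and in Theorem~\ref{th:main}) is establish the escape estimate only on a distinguished set $S$ of places---in the normal form $f_{\lambda_0,\lambda_\infty}(z)=(\lambda_0z+z^2)/(\lambda_\infty z+1)$, with $w$ a root of $\lambda_0\lambda_\infty w^2+2w+1=0$, these are the places where $\log|w|_v>\log^+|\lambda_0^{-1}|_v+C_v$---and then prove a separate global lemma, via the product formula and the relation $\lambda_\infty=\lambda_0w^2/(2w+1)$, that $S$ supports at least half of $h(\lambda_\infty)$ up to $O(h(\lambda_0))$ error. At the remaining places one uses only the trivial lower bound. This ``the good places carry a positive proportion of the height'' step is the crux, and it is missing from your plan.

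Two further discrepancies. The escape mechanism at $v\in S$ is not a sharpness statement about critical values: the branch point $\xi_1=-\lambda_0^2w^2$ lands in the attracting basin of the fixed point at $\infty$, whose multiplier is $\lambda_\infty$ with $|\lambda_\infty|_v$ small at the places in question, so $\log^+|f^k(\xi_1)|_v\gtrsim k\log^+|\lambda_\infty^{-1}|_v$; the additive error in converting this to the local canonical height (\cite[Lemma~20]{hcrit}) is then beaten by iterating a fixed number of times. In particular the uniformity-in-$\lambda$ difficulty you flag near a neutral marked fixed point does not arise, since only places where the unmarked fixed point is genuinely attracting are used. Also, the two critical points are not handled simultaneously at the same places: one bounds $\hat{h}_f(\xi_1)$ and deduces the bound for $\xi_2$ from the involution $w\mapsto -w/(2w+1)$, which swaps the branch points while fixing $\lambda_0$ and $\lambda_\infty$. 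Finally, your normal form omits the one-parameter family $z+a+z^{-1}$ of quadratic maps not conjugate to the two-fixed-point form; it requires a separate (easy) argument via \cite{call-silv}.
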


In particular, for $f$ in the normal form
\[f(z)=\frac{\lambda_0 z+ z^2}{\lambda_\infty z+1},\]
we show that both critical points $c$ satisfy
\[\hat{h}_f(c)\geq \frac{1}{32}h(\lambda_\infty)-\frac{25}{32}h(\lambda_0)-\frac{5}{4},\]
 on the hypothesis that $\lambda_0\neq 0$.
The one-parameter family of quadratic morphisms not of this form is treated separately.

Results in this note relate to conjectures made in~\cite{hcrit}. There, we defined the \emph{$k$-depleted critical height} for a rational function $f$ with critical points $c_1, ..., c_{2d-2}$ (listed with multiplicity) by
\[\hat{h}^{(k)}_{\mathrm{crit}}(f)=\min_{\substack{I\subseteq \{1, ..., 2d-2\} \\ |I|=k}}\sum_{i\not\in I} \hat{h}_{f}(c_i),\]
so that
\[0=\hat{h}^{(2d-2)}_{\mathrm{crit}}\leq\cdots \leq \hat{h}^{(1)}_{\mathrm{crit}}\leq  \hat{h}^{(0)}_{\mathrm{crit}}=\hat{h}_{\mathrm{crit}}.\]
Alternatively, one could define $\hat{h}_{\mathrm{crit}}^{(k)}$ by excluding critical points with multiplicity, or even excluding entire dependence classes, but the definition here is more natural as a function on $\mathsf{M}_d$.

In these terms, the main results of~\cite{pcfpoly, hcrit} are that
\[\hat{h}_{\mathrm{crit}}\asymp h_{\polym{d}}\]
away from the flexible Latt\`{e}s examples, while
 Theorems~\ref{th:main} and~\ref{th:quad} prove the conjectured asymptotic
\begin{equation}\label{eq:asymp}\hat{h}_{\mathrm{crit}}^{(1)}\asymp h_{\per{n}{\lambda}}\end{equation}
for polynomials and quadratic rational functions (for $n=1$, in the latter case), with additional uniformity of the implied constants in terms of $\lambda\neq 0$. Note that only one direction of this asymptotic is explicitly treated, but the other follows from $\hat{h}_{\mathrm{crit}}^{(1)}\leq \hat{h}_{\mathrm{crit}}\ll h_{\mathsf{M}_d}$.
We expect~\eqref{eq:asymp} to hold for rational functions in general, excluding the flexible Latt\`{e}s families.

The paper is structured as follows. In Section~\ref{sec:local}, we refine the arguments in~\cite{pcfpoly} to establish lower bounds on escape rates of critical points at certain places. In Section~\ref{sec:global}, we show that the collection of places at which the inequalities from the previous section hold contribute enough to $h(f)$ that using trivial lower bounds at the other places proves Theorem~\ref{th:main}. In Section~\ref{sec:function} we explore implications for families of maps, and in Section~\ref{sec:quad} we treat quadratic rational functions.

\section{Local inequalities}\label{sec:local}

Fix $d\geq 2$.
In this section, we let $K$ stand for a field of characteristic 0 or $p>d$, equipped with some absolute value $|\cdot|$, with associated valuation $v$. We will say that $v$ is \emph{$p$-adic}, for a particular prime $p\in\ZZ$, just in case $0<|p|<1$, and \emph{archimedean} just in case there is an integer $n$ with $|n|>1$. Let $R=\ZZ[\frac{1}{2}, ..., \frac{1}{d}]$, which we may map uniquely to a subring of $K$ given our hypothesis on the characteristic of $K$. We will say that $R$ is $v$-integral if and only if $|x|\leq 1$ for all $x\in R$, noting that this occurs if $|\cdot|$ is neither archimedean nor $p$-adic for any $p\leq d$. Given a point $\mathbf{x}=(x_1, ..., x_n)\in \AA^n(K)$, we set
\[\|\mathbf{x}\|=\max\{|x_1|, ..., |x_n|\}.\]

Anticipating a lemma in the next section which allows us to choose a normal form, we will consider only polynomials of the form
\begin{equation}\label{eq:normalform}
f_{\mathbf{c}}(z)=\frac{1}{d}z^d-\frac{1}{d-1}(c_1+\cdots + c_{d-1})z^{d-1}+\cdots +(-1)^{d-1}c_1c_2\cdots c_{d-1} z,
\end{equation}
for $\mathbf{c}=(c_1, ..., c_{d-1})\in \mathbb{A}^{d-1}$, so that
\[\frac{d}{dz}f_{\mathbf{c}}(z)=(z-c_1)(z-c_2)\cdots(z-c_{d-1}).\] We will fix a $\lambda\in K^\times$ (although any dependence on this value will be tracked explicitly), and we restrict attention to points $\mathbf{c}\in \mathbb{A}^{d-1}$ satisfying \[c_1c_2\cdots c_{d-1}=(-1)^{d-1}\lambda,\] that is, points $\mathbf{c}$ for which the fixed point at $z=0$ for $f_\mathbf{c}$ has multiplier $\lambda$. The map $\mathbf{c}\mapsto f_{\mathbf{c}}$ is a finite map from this restricted domain to $\per{1}{\lambda}\subseteq \polym{d}$, surjective when we extend to the algebraic closure $\overline{K}$.

As usual, we set
\[G_{f_\mathbf{c}}(z)=\lim_{n\to\infty}d^{-n}\log^+|f_{\mathbf{c}}^n(z)|,\]
 where $\log^+x=\log\max\{1, x\}$. The existence of this limit for all $z\in K$ is standard, as is the following lemma.
\begin{Lemma}\label{lem:greensprops}
There exist constants $C_{1}$, $C_{2}$, and $C_{3}$ depending just on $d$ and $K$ such that the following hold.
\begin{enumerate}
\item \label{it:crudebound} For all $z\in K$, \[G_{f_\mathbf{c}}(z)\leq \log^+|z|+\frac{d}{d-1}\log^+\|\mathbf{c}\|+C_1.\]
\item \label{it:escaperegion} If $\log|z|>\log^+\|\mathbf{c}\|+C_2$ then \[-C_3\leq G_{f_\mathbf{c}}(z)-\log^+|z|\leq C_3.\]
\end{enumerate}
Furthermore, we may take $C_1=C_2=C_3=0$ if $R$ is $v$-integral.
\end{Lemma}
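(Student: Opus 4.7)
Lemma \ref{lem:greensprops} is a standard pair of Green's function estimates for the polynomial family (\ref{eq:normalform}), and I would prove it by carefully tracking the dependence of the constants on $\mathbf{c}$ and on the type of $v$. Writing $M = \|\mathbf{c}\|$, the core step will be to establish a one-step comparison $|f_\mathbf{c}(z)| \leq A \cdot \max\{|z|, M\}^d$ with $A$ depending only on $d$ and the type of $v$. At archimedean places, this comes from the triangle inequality applied to (\ref{eq:normalform}): the coefficient of $z^{d-k}$ is, up to sign and the factor $\tfrac{1}{d-k}$, the $k$-th elementary symmetric function in the $c_i$, hence bounded by $\binom{d-1}{k}M^k$. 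At non-archimedean places, the same bound follows from the ultrametric inequality with $A = \max_{1 \leq k \leq d}|k^{-1}|$, which in particular equals $1$ whenever $R$ is $v$-integral.

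Given this one-step bound, part (A) would follow by iteration. Taking $\log^+$ gives $\log^+|f_\mathbf{c}(z)| \leq d\log^+|z| + d\log^+ M + \log A$; applying this inductively to $f_\mathbf{c}^n(z)$ and dividing by $d^n$ produces a geometric sum in $\log^+M$, and passing to the limit $n\to\infty$ yields $G_{f_\mathbf{c}}(z) \leq \log^+|z| + \tfrac{d}{d-1}\log^+ M + \tfrac{\log A}{d-1}$, which is the desired bound with $C_1 = (\log A)/(d-1)$. When $R$ is $v$-integral we may take $A = 1$, so $C_1 = 0$.

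For (B), the idea is that once $|z|$ is sufficiently large relative to $M$, the leading term $z^d/d$ of (\ref{eq:normalform}) dominates all others, so $|f_\mathbf{c}(z)|$ is essentially $|z|^d/|d|$. At archimedean places, I would choose $C_2$ large enough that $\log|z| > \log^+ M + C_2$ forces $\tfrac{1}{2d}|z|^d \leq |f_\mathbf{c}(z)| \leq \tfrac{2}{d}|z|^d$ (say), and verify that iterates $f_\mathbf{c}^n(z)$ then remain in the same escape regime. The telescoping identity
\[G_{f_\mathbf{c}}(z) - \log|z| \; = \; \sum_{n=0}^\infty d^{-(n+1)}\bigl(\log|f_\mathbf{c}^{n+1}(z)| - d\log|f_\mathbf{c}^n(z)|\bigr)\]
then converges absolutely and uniformly to a quantity bounded by some $C_3$. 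In the $v$-integral non-archimedean case, the hypothesis $\log|z| > \log^+ M$ together with $|k| = 1$ for $1 \leq k \leq d$ forces the ultrametric \emph{equality} $|f_\mathbf{c}(z)| = |z|^d$, which persists under iteration and yields $G_{f_\mathbf{c}}(z) = \log|z| = \log^+|z|$ exactly, so $C_2 = C_3 = 0$.

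There is no genuine conceptual obstacle here; the lemma is routine bookkeeping. The only mild subtlety is the appearance of the factor $\tfrac{d}{d-1}$ in (A), which reflects the fact that the one-step estimate contributes $\log^+ M$ at \emph{every} iteration rather than shrinking, and so sums geometrically to $\tfrac{d}{d-1}\log^+M$. Since the lemma only asserts existence of the constants with dependence on $d$ and $K$, no optimization is required.
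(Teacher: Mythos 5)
Your proof is correct and is exactly the ``elementary estimate on the difference between $\log^+|f_\mathbf{c}(z)|$ and $d\log^+|z|$'' that the paper itself merely cites (to \cite{pcfpoly}) rather than writing out: a one-step bound $|f_\mathbf{c}(z)|\leq A\max\{|z|,\|\mathbf{c}\|\}^d$ iterated and summed geometrically for (A), and domination by the leading term plus a telescoping series for (B), with the ultrametric equality giving all constants equal to $0$ in the $v$-integral case. No discrepancy with the paper's (sketched) argument.
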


\begin{proof}
The argument is standard, coming from an elementary estimate on the difference between $\log^+|f_\mathbf{c}(z)|$ and $d\log^+|z|$; for example see \cite{pcfpoly}. We note here that the constants can easily be made explicit, and we may take $C_1=C_2=C_3=0$ whenever $|1/d|= 1$, and $|1/(d-1)|, ..., |1/2|\leq 1$, which is the case when $R$ is $v$-integral.

%
%

\end{proof}

We now recall the definition of dependence introduced by Baker and DeMarco~\cite{bd2}. Fix a polynomial $f(z)\in K[z]$ with $\deg(f)\geq 2$, and two points $a$, $b$. We say that $a$ and $b$ are \emph{dependendent} if and only if there exist $k$, $m$, $n$ and a non-constant poylnomial $g$ such that \begin{equation}\label{eq:dep}g\circ f^k=f^k\circ g\text{ and }f^n(a)=g\circ f^m(b).\end{equation} Note that if $K\subseteq \CC$, then  a theorem of Ritt~\cite{ritt} implies that in any relation of this form, $g$ is either linear or $f$ and $g$ share an iterate. In particular, dependence is an equivalence relation over $\CC$, and in general we will extend it to be one. Given Ritt's result, we can show that if $a$ and $b$ satisfy a dependence as above, then either $a$ and $b$ are preperiodic or else the quantity $d^{m-n}\deg(g)$ is well-defined (although of course $g$, $n$, and $m$ are not). We will give a simple, self-contained proof of the part of this that we need, which does not assume that we are working over a field of characteristic 0.

Let $\mathcal{G}_{K, f}$ be the set of functions $\psi:K\to\RR$ satisfying \[\psi\circ g = \deg(g)\psi+O_g(1)\] for any polynomial $g(z)\in K[z]$, and $\psi\circ f=\deg(f)\psi$. 
\begin{Lemma}
Let $a, b\in K$ be dependendent under $f(z)\in K[z]$. Then there exists a number $\rho(a, b)\in\RR$ such that for all $\psi\in \mathcal{G}_{K, f}$ we have
\[\psi(a)=\rho(a, b)\psi(b).\]
\end{Lemma}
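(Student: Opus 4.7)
The plan is to leverage the defining relation $f^n(a) = g(f^m(b))$: I would iterate $f$ on both sides to amplify, apply $\psi$, and then use the exactness of $\psi \circ f = d\, \psi$ (with $d := \deg f$) to wash out the $O_g(1)$ error in the $g$-functional equation by dividing through by a large power of $d$ and taking a limit.

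First, I would note that $g \circ f^k = f^k \circ g$ yields, by induction, $g \circ f^{jk} = f^{jk} \circ g$ for every $j \geq 0$. Applying $f^{jk}$ to both sides of the dependence relation then gives
\[
f^{n+jk}(a) \;=\; f^{jk}\bigl(g(f^m(b))\bigr) \;=\; g\bigl(f^{m+jk}(b)\bigr).
\]
Next, I would apply an arbitrary $\psi \in \mathcal{G}_{K,f}$, use $\psi \circ f = d\, \psi$ exactly on the left and on the innermost iterates on the right, and use $\psi \circ g = \deg(g)\, \psi + O_g(1)$ for the outer $g$, to obtain
\[
d^{n+jk}\, \psi(a) \;=\; \deg(g)\, d^{m+jk}\, \psi(b) \;+\; O_g(1),
\]
with an implied constant depending on $g$ but uniform in $j$. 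Dividing through by $d^{n+jk}$ and letting $j \to \infty$, so that the remainder $O_g(d^{-n-jk})$ tends to $0$ (using $d \geq 2$), yields the exact identity $\psi(a) = \deg(g)\, d^{m-n}\, \psi(b)$. Setting $\rho(a,b) := \deg(g)\, d^{m-n}$ then gives the desired formula.

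Well-definedness of $\rho(a,b)$, i.e.\ independence from the chosen witness $(k,m,n,g)$, is automatic from the identity itself: if some $\psi \in \mathcal{G}_{K,f}$ has $\psi(b) \neq 0$, then $\rho(a,b) = \psi(a)/\psi(b)$ is forced and so the quantity $\deg(g)\, d^{m-n}$ must agree across all witnesses; if instead every such $\psi$ annihilates $b$, then the same proof forces every $\psi$ to annihilate $a$ as well, and the choice of $\rho(a,b)$ is immaterial.

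The one genuine obstacle is the approximate (rather than exact) functional equation for $\psi$ under $g$; the iteration-and-normalize trick above is precisely what converts that approximate relation into an exact one in the limit, and the commutation hypothesis $g \circ f^k = f^k \circ g$ is essential here because it is what allows the $f^{jk}$ inserted on the $a$-side to be absorbed cleanly into the argument of $g$ on the $b$-side.
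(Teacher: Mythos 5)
Your proof is correct and follows essentially the same route as the paper: both arguments amplify by composing with $f^{jk}$, use the commutation $g\circ f^{jk}=f^{jk}\circ g$ to move the iterate across $g$, and divide by $d^{n+jk}$ so the $O_g(1)$ error vanishes in the limit. The only (cosmetic) difference is that the paper first extracts the exact functional identity $\psi\circ g=\deg(g)\,\psi$ for any $g$ commuting with an iterate of $f$ and then evaluates at $f^m(b)$, whereas you run the same limiting argument directly at the point in question; the well-definedness discussion for $\rho(a,b)$ matches the paper's as well.
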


\begin{proof}
It follows from the definition of $\mathcal{G}_{K, f}$ that whenever $g\circ f^n=f^n\circ g$, we have for any $k\geq 1$
\begin{multline*}
\psi\circ g = \deg(f)^{-{kn}}\psi\circ f^{kn}\circ g = \deg(f)^{-{kn}}\psi\circ g\circ f^{kn} \\ = \deg(f)^{-{kn}}\left(\deg(g)\psi \circ f^{kn}+O_g(1)\right)=\deg(g)\psi+o_g(1),
\end{multline*}
where $o_g(1)\to 0$ as $k\to\infty$. Hence for $g$ commuting with $f$ we have $\psi\circ g = \deg(g)\psi$ for all $\psi\in \mathcal{G}_{K, f}$. The relation $f^n(a)=g\circ f^m(b)$ now implies
\[\psi(a) = d^{-n}\psi(f^n(a)) = d^{-n}\psi\circ g\circ f^m(b)=d^{m-n}\deg(g)\psi(b),\]
and so we may take $\rho(a, b)=d^{m-n}\deg(g)$ for any relation of the form above, which is necessarily well-defined if there is a single $\psi\in \mathcal{G}_{K, f}$ with $\psi(a)\neq 0$. If there is no such $\psi$, then we adopt the convention that $\rho(a, b)=1$.
\end{proof}

We note that the $v$-adic escape-rate function $G_f$ is an element of $\mathcal{G}_{K, f}$, and $\rho(a, b)$ essentially measures how much further along the escape to infinity $a$ is compared to $b$.  The point of the previous lemma is simply that if $a$ and $b$ are dependent, then the ratio $G_f(a)/G_f(b)$, if defined, is in fact independent of the choice of absolute value  (if $K$ admits more than one such choice), a fact that follows from the aforementioned result of Ritt when $K\subseteq \CC$. Note that if $K$ is a number field, then the canonical height $\hat{h}_f$ is also in $\mathcal{G}_{K, f}$, in which case the condition that $\psi(a)=0$ for all $\psi\in \mathcal{G}_{K, f}$ implies that $a$ is preperiodic.

In general, we decompose the set of critical points into dependence-equivalence classes, and declare a representative $c$ to be \emph{$K$-maximal} if and only if either $\psi(c)=0$ for all $\psi\in \mathcal{G}_{K, f}$, or else $\rho(c, b)\geq 1$ for all dependent critical points $b$. Every equivalence class has at least one $K$-maximal representative, because $\rho(b, a)=\rho(a, b)^{-1}$ and $\rho(a, c)=\rho(a, b)\rho(b, c)$. Note that if $K$ is a number field, $c$ being a $K$-maximal representative is equivalent to it having maximal canonical height in its equivalence class.


\begin{Lemma}\label{lem:localindep} For any $d\geq 3$, there exists a constant $C_4$ depending only on $d$ and $K$ such that the following holds.
Suppose that $c_1$ is a $K$-maximal representative of its dependence class, and suppose that \begin{equation}\label{eq:c1small}5\log|c_1|<\log\|\mathbf{c}\|.\end{equation} Then either
\begin{enumerate}
\item $f_\mathbf{c}$ has only one dependency class of critical points, and \begin{equation}\label{eq:maincbound}\log^+\|\mathbf{c}\| \leq C_4,\end{equation} or 
\item  there is a critical point $c_i$, independent of $c_1$, with \begin{equation}\label{eq:mainGbound} G_{f_{\mathbf{c}}}(c_i)\geq \log^+\|\mathbf{c}\|-C_4.\end{equation}
\end{enumerate}
Furthermore, if $R$ is $v$-integral, we may take $C_4=0$.
\end{Lemma}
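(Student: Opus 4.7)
I would set up a dichotomy: either $\|\mathbf{c}\|$ is forced to be bounded (establishing the first conclusion or a trivial version of the second), or there is a critical point outside the dependence class of $c_1$ whose escape rate is essentially $\log^+\|\mathbf{c}\|$ (establishing the second conclusion directly).

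In the spirit of~\cite{pcfpoly}, I would first show that once $\log^+\|\mathbf{c}\|$ exceeds a threshold depending only on $d$ and $K$, some critical point $c_i$ satisfies $|f_\mathbf{c}(c_i)|\geq \|\mathbf{c}\|^d/C'$ for a constant $C'=C'(d,K)$. This places $f_\mathbf{c}(c_i)$ well inside the escape region of the second part of Lemma~\ref{lem:greensprops}, and the functional equation $G_{f_\mathbf{c}}(c_i)=d^{-1}G_{f_\mathbf{c}}(f_\mathbf{c}(c_i))$ then yields $G_{f_\mathbf{c}}(c_i)\geq \log^+\|\mathbf{c}\|-C''$ for some $C''=C''(d,K)$; both $C'$ and $C''$ vanish when $R$ is $v$-integral.

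Next I would bound $G_{f_\mathbf{c}}(c_1)$ from above using $|c_1|\leq\|\mathbf{c}\|^{1/5}$. Expanding $f_\mathbf{c}(c_1)=\lambda c_1+\sum_{k\geq 2} a_k c_1^k$ via~\eqref{eq:normalform}, the $|a_k|$ are controlled by elementary symmetric functions in $\mathbf{c}$; the dominant contributions come from $k=1$ (using $|\lambda||c_1|=|c_1|^2\prod_{i\neq 1}|c_i|\leq\|\mathbf{c}\|^{d-8/5}$) and $k=2$, each bounded by $C_d\|\mathbf{c}\|^{d-8/5}$. Feeding $|f_\mathbf{c}(c_1)|\leq C_d\|\mathbf{c}\|^{d-8/5}$ into the first part of Lemma~\ref{lem:greensprops} and using $G_{f_\mathbf{c}}(c_1)=d^{-1}G_{f_\mathbf{c}}(f_\mathbf{c}(c_1))$ then gives
\[G_{f_\mathbf{c}}(c_1)\leq (1-\epsilon_d)\log^+\|\mathbf{c}\|+O_{d,K}(1),\qquad \epsilon_d:=\frac{8}{5d}-\frac{1}{d-1},\]
with $\epsilon_d>0$ precisely when $d\geq 3$, matching the hypothesis.

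The dichotomy closes as follows. If the critical point $c_i$ of the first step is independent of $c_1$, the second conclusion holds. Otherwise $c_i$ is dependent on $c_1$, so the $K$-maximality of $c_1$ together with $G_{f_\mathbf{c}}\in\mathcal{G}_{K, f}$ gives $G_{f_\mathbf{c}}(c_1)\geq G_{f_\mathbf{c}}(c_i)$; combining with the two estimates yields $\epsilon_d\log^+\|\mathbf{c}\|\leq C''+O_{d,K}(1)$ and hence $\log^+\|\mathbf{c}\|\leq C_4$ for an explicit $C_4=C_4(d,K)$. In this bounded regime we are in the first conclusion if there is a single dependence class, and trivially in the second conclusion otherwise, taking any critical point independent of $c_1$ (whose $G_{f_\mathbf{c}}\geq 0$ exceeds the now-nonpositive right-hand side). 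The main obstacle is the first step: producing a local analogue of ``some branch value essentially saturates the trivial upper bound'' that is uniform in the normal form~\eqref{eq:normalform} and gives the $v$-integral case with no additive error, which in the non-archimedean setting requires ruling out coincidental cancellations among coefficients of equal valuation in the formula for $f_\mathbf{c}(c_i)$.
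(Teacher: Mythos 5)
Your proposal is correct and follows essentially the same route as the paper: the Nullstellensatz/resultant bound from \cite{pcfpoly} produces a branch value of size roughly $\|\mathbf{c}\|^d$, the hypothesis~\eqref{eq:c1small} together with the divisibility of the form $f_{\mathbf{c}}(c_1)$ by $c_1^2$ caps $G_{f_{\mathbf{c}}}(c_1)$ at $(1-\frac{3d-8}{5d(d-1)})\log^+\|\mathbf{c}\|+O(1)$, and $K$-maximality closes the dichotomy exactly as in the paper (your $\epsilon_d$ matches the paper's constant). The only cosmetic difference is that the paper applies the Nullstellensatz to the reduced forms $P_2,\dots,P_{d-1}$ in $(c_2,\dots,c_{d-1})$ after splitting off the $c_1Q_i$ term, which forces the large branch value to have index $i\geq 2$, whereas you apply it to all $d-1$ forms and absorb the possibility $i=1$ into the bounded regime via the upper bound on $|f_{\mathbf{c}}(c_1)|$; both work.
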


\begin{proof}
We begin by remarking that if we, in any special case, establish the bound~\eqref{eq:maincbound}, then we have proven the lemma in that case. In these cases~\eqref{eq:mainGbound} holds for all critical points, by the non-negativity of $G_{f_{\mathbf{c}}}$, and the non-positivity of the right-hand-side of~\eqref{eq:mainGbound} given~\eqref{eq:maincbound}. Similarly, as long as we insist that that our choice satisfies $C_4\geq 0$, the claim in the lemma is trivially true when $\|\mathbf{c}\|\leq 1$. So we will suppose throughout that $\|\mathbf{c}\| >1$ which will ensure under~\eqref{eq:c1small} that $\|\mathbf{c}\| = \|c_2, ..., c_{d-1}\|$.

For each $2\leq i\leq d-1$, write
\[f_{\mathbf{c}}(c_i)=P_i(c_2, ..., c_{d-1})+c_1Q_i(c_1, ..., c_{d-1}),\]
where $P_i, Q_i$ are homogeneous forms over $R$ of degree $d$ and $d-1$. It was shown in~\cite{pcfpoly} (see also~\cite[Section~2.2.2]{berteloot}) that the homogeneous forms $f_{\mathbf{c}}(c_1), ..., f_{\mathbf{c}}(c_{d-1})$ have no common non-trivial root in any extension of $R/\mathfrak{m}$, for any maximal ideal $\mathfrak{m}\subseteq R$, and so neither do the forms $P_2, ..., P_{d-1}$ in the variables $c_2, ..., c_{d-1}$ (since any common root corresponds to a common root of the previous collection of forms with $c_1=0$). It follows (from Hilbert's Nullstellensatz) that there exist an integer $e$ and forms $A_{i, j}\in R[c_2, ..., c_{d-1}]$ of degree $e-d$ such that
\[c_i^e=A_{i, 2}P_2+\cdots +A_{i, d-1}P_{d-1}\]
for each $2\leq i\leq d-1$. By the triangle inequality, we have
\[d\log\|c_2, ..., c_{d-1}\|\leq \log\|P_2(c_2, ..., c_{d-1}), ..., P_{d-1}(c_2, ..., c_{d-1})\|+C_5,\]
where we may take $C_5=0$ if  $R$ is $v$-integral.

Now, our hypotheses imply that 
\[\log^+\|\mathbf{c}\|=\log\|\mathbf{c}\|=\log\|c_2, ..., c_{d-1}\|,\] so there exists an $i\geq 2$ with \begin{equation}\label{eq:fminusQ}\log|f_{\mathbf{c}}(c_i)-c_1Q_i(\mathbf{c})|\geq d\log^+ \|\mathbf{c}\|-C_5.\end{equation}
Also, since the coefficients of $Q_i$ are in $R$, we have from the triangle inequality
\begin{equation}\label{eq:c1Q}\log|c_1Q_i(\mathbf{c})|\leq\log|c_1|+ (d-1)\log^+\|\mathbf{c}\| + C_6< \left(d-\frac{4}{5}\right)\log^+\|\mathbf{c}\| + C_6\end{equation}
for some constant $C_6$ which we can take to be 0 if $R$ is $v$-integral.

Now, if we have
\begin{equation}\label{eq:morethanhalf}\log|c_1Q_i(\mathbf{c})|\geq \log|f_{\mathbf{c}}(c_i)-c_1Q_i(\mathbf{c})|-\log^+|2|,\end{equation}
then it follows from~\eqref{eq:fminusQ} and~\eqref{eq:c1Q} that
\[\frac{4}{5}\log^+ \|\mathbf{c}\| \leq C_5+C_6+\log^+|2|.\]
Taking $C_4\geq \frac{5}{4}\left( C_5+C_6+\log^+|2|\right)$
in this case we obtain~\eqref{eq:maincbound}. We have seen that this is sufficient to establish the lemma in this case.

So we may suppose that~\eqref{eq:morethanhalf} fails, and so 
\begin{eqnarray*}
\log|f_\mathbf{c}(c_i)|&\geq& \log|f_{\mathbf{c}}(c_i)-c_1Q_i(\mathbf{c})|-\log^+|2|\\&\geq& d\log^+\|\mathbf{c}\|-C_5-\log^+|2|.
\end{eqnarray*}

First suppose that $\log|f_{\mathbf{c}}(c_i)|\leq \log^+\|\mathbf{c}\|+C_2$, so that we may not apply Lemma~\ref{lem:greensprops}~(\ref{it:escaperegion}) to $z=f_\mathbf{c}(c_i)$. In this case,
\[\log^+\|\mathbf{c}\|\leq \frac{1}{d-1}\left(C_2+C_5+\log^+|2|\right),\] which implies~\eqref{eq:maincbound}, as long as we take $C_4\geq \frac{1}{d-1}(C_2+C_5+\log^+|2|)$, and hence the lemma is proved in this case.

On the other hand, suppose that $\log|f_{\mathbf{c}}(c_i)|> \log^+\|\mathbf{c}\|+C_2$ whereupon, by Lemma~\ref{lem:greensprops}~(\ref{it:escaperegion}),
\begin{eqnarray*}
G_{f_{\mathbf{c}}}(c_i)&=&\frac{1}{d}G_{f_{\mathbf{c}}}(f_{\mathbf{c}}(c_i))\\
&\geq& \frac{1}{d}\log|f_\mathbf{c}(c_i)|-\frac{1}{d}C_3\\
&\geq& \log^+\|\mathbf{c}\|-\frac{1}{d}(C_3+C_5+\log^+|2|).
\end{eqnarray*}
Choosing $C_4\geq \frac{1}{d}(C_3+C_5+\log^+|2|)$, this is at least as strong as the lower bound claimed in~\eqref{eq:mainGbound}, and so if $c_i$ is independent of $c_1$, this completes the proof of the lemma.

Otherwise, suppose that $c_i$ is dependent on $c_1$. Since $c_1$ was assumed $K$-maximal, we have $G_{f_{\mathbf{c}}}(c_1)\geq G_{f_{\mathbf{c}}}(c_i)$. There is a lower bound on $G_{f_{\mathbf{c}}}(c_i)$ above, and we can construct an upper bound on $G_{f_{\mathbf{c}}}(c_1)$. In particular, 
note that the homogeneous form $f_{\mathbf{c}}(c_1)\in R[c_1, ..., c_{d-1}]$, is divisible by $c_1^2$. We thus have from~\eqref{eq:c1small} that
\begin{eqnarray*}\log|f_{\mathbf{c}}(c_1)|&\leq& 2\log |c_1|+(d-2)\log\|\mathbf{c}\|+C_7\\
&\leq& \left(d-\frac{8}{5}\right)\log^+\|\mathbf{c}\|+C_7\end{eqnarray*}
for some constant $C_7$ which we can take to be 0 if $R$ is $v$-integral.

It follows from Lemma~\ref{lem:greensprops}~(\ref{it:crudebound}) that 
\begin{eqnarray*}
G_{f_{\mathbf{c}}}(c_1)&=&\frac{1}{d}G_{f_{\mathbf{c}}}(f_\mathbf{c}(c_1))\\
&\leq &\frac{1}{d}\left(\left(d-\frac{8}{5}\right)\log^+\|\mathbf{c}\|+C_7+\frac{d}{d-1}\log^+\|\mathbf{c}\|+C_1\right)\\
&\leq & \left(1-\frac{3d-8}{5d(d-1)}\right)\log^+\|\mathbf{c}\|+\frac{1}{d}\left(C_1+C_7\right).
\end{eqnarray*}

Combining the upper bound on $G_{f_{\mathbf{c}}}(c_1)$ with the lower bound on $G_{f_{\mathbf{c}}}(c_i)$, we have
\[\log^+\|\mathbf{c}\|\leq \frac{5(d-1)}{3d-8}\left(C_1+C_3+C_5+C_7+\log^+|2|\right).\]
Choosing $C_4$ large enough, this establishes~\eqref{eq:maincbound} and hence proves the lemma in the remaining case.
\end{proof}

\section{The proof of Theorem~\ref{th:main}}\label{sec:global}

We now work over a number field $K$, applying the results of the previous section to the  various standard absolute values on $K$. Quantities from the previous section which depend on the place $v$ now acquire an appropriate subscript.

We begin by explaining why we may freely fix a normal form. Silverman has shown~\cite[p.~103]{barbados} that in general we have for a rational function $f$
\[h_{\mathsf{M}_d}(f)\asymp \min_{g\sim f}h_{\operatorname{Hom}_d}(g),\]
where the minimum is taken over functions conjugate to $f$, and $\operatorname{Hom}_d\subseteq \PP^{2d+1}$ is the space of rational functions of degree $d$ parametrized by their coefficients. A normal form corresponds to a subvariety $U\subseteq \operatorname{Hom}_d$, and so we have
\[h_{\mathsf{M}_d}(f_u)\ll h_{\PP^{2d+1}}(u)\]
for any $u\in U$.  In our case, the normal form $f_{\mathbf{c}}$ corresponds to an embedding $\AA^{d-1}\to \operatorname{Hom}_d$, and one can check directly that $h_{\operatorname{Hom}_d}(f_\mathbf{c})\ll h(\mathbf{c})$, and hence
\[h_{\mathsf{M}_d}(f_\mathbf{c})\leq \alpha h(\mathbf{c})+\beta\]
for some constants $\alpha$ and $\beta$ depending on $d$.

 If we can show that for all $\mathbf{c}$ such that $f_\mathbf{c}\in \per{n}{\lambda}$ we have
\[h(\mathbf{c})\leq Ah(\lambda)+B\]
or else there exist two independent critical points $c_1$ and $c_2$ of $f_\mathbf{c}$ with
\[\hat{h}_{f_\mathbf{c}}(c_1), \hat{h}_{f_\mathbf{c}}(c_2)>\epsilon h(\mathbf{c}),\]
then we will have shown that either
\[h_{\moduli{d}}(f_\mathbf{c})\leq \alpha Ah(\lambda)+(\alpha B+\beta)\]
or else
\[\hat{h}_{f_\mathbf{c}}(c_1), \hat{h}_{f_\mathbf{c}}(c_2)> \epsilon\alpha^{-1}( h_{\moduli{d}}(f_\mathbf{c})- \beta)> \frac{\epsilon}{2 \alpha}h_{\moduli{d}}(f_\mathbf{c}),\]
except where $h_{\moduli{d}}(f_\mathbf{c})\leq 2\beta$. This will prove the result for all conjugacy classes in $\per{n}{\lambda}$ containing a polynomial of the form $f_{\mathbf{c}}$, but every polynomial is conjugate over $\overline{K}$ to one of this form.

We can further simplify the argument by restricting to the case $n=1$.
\begin{Lemma}\label{lem:ntoone}
If Theorem~\ref{th:main} is true with $n=1$, then it is true in full generality.
\end{Lemma}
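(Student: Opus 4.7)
The plan is to pass from $f$ to its $n$-th iterate $F=f^n$, which has degree $d^n$ and, since $f\in\per{n}{\lambda}$, lies in $\per{1}{\lambda}\subseteq\polym{d^n}$. Applying the hypothesised $n=1$ case of Theorem~\ref{th:main} in degree $d^n$ to $F$ yields constants $\epsilon_1,A_1,B_1$ such that either $h_{\polym{d^n}}(F)\le A_1 h(\lambda)+B_1$, or else $F$ admits independent critical points $C_1,C_2$ satisfying $\min\{\hat{h}_F(C_1),\hat{h}_F(C_2)\}>\epsilon_1 h_{\polym{d^n}}(F)$. The remaining task is to transport both conclusions back to $f$.

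For the height comparison, I would argue that the iteration morphism $\Phi_n:\polym{d}\to\polym{d^n}$, $[f]\mapsto[f^n]$, has finite fibres (by Ritt's theorem on polynomial decompositions, since the condition $f^n=g^n$ forces $f$ and $g$ to be related by finitely many choices), whence the pullback of an ample height on $\polym{d^n}$ is ample on $\polym{d}$, giving $h_{\polym{d}}(f)\ll h_{\polym{d^n}}(F)+O(1)$. This absorbs the bounded-height alternative. For the critical-point alternative, the chain-rule factorisation $(f^n)'(z)=\prod_{j=0}^{n-1}f'(f^j(z))$ shows that every critical point $C$ of $F$ satisfies $f^j(C)=c$ for some critical point $c$ of $f$ and some $0\le j\le n-1$, and since $\hat{h}_F=\hat{h}_f$ on $\overline{K}$ (both limits being computed along the same subsequence of iterates) one gets $\hat{h}_f(c)=d^j\hat{h}_F(C)\ge\hat{h}_F(C)$. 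Applied to $C_1$ and $C_2$ this extracts critical points $c_1,c_2$ of $f$ with $\hat{h}_f(c_i)>\epsilon_1 h_{\polym{d^n}}(F)\gg h_{\polym{d}}(f)-O(1)$ after invoking the previous comparison.

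The principal obstacle is verifying that the resulting $c_i$ are independent under $f$. Supposing not, a relation $f^{n_0}(c_1)=g\circ f^m(c_2)$ with $g$ commuting with $f^k$ must be promoted, via the identities $c_i=f^{j_i}(C_i)$, to a corresponding relation between $C_1$ and $C_2$ in terms of $F$-iterates and a polynomial $g'$ commuting with a power of $F$; this is possible because $g$ automatically commutes with $F^k=f^{nk}$ and $f$ itself commutes with $F$, so stray powers of $f$ can be absorbed into $F$-iterates after composing $g$ by suitable powers of $f$ (enlarging $k$ by a factor of $n$ if necessary to preserve the commuting property). The contradiction with the assumed independence of $C_1,C_2$ under $F$ then completes the reduction, yielding Theorem~\ref{th:main} for $f$ with constants determined from the $n=1$ constants in degree $d^n$ together with the height-comparison constants for $\Phi_n$.
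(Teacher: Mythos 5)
Your reduction follows essentially the same route as the paper: replace $f$ by $F=f^n$, which has a fixed point of the specified multiplier, apply the $n=1$ case of Theorem~\ref{th:main} in degree $d^n$, pull the two large critical points of $F$ forward along $f^{j_i}$ to critical points of $f$ using $(f^n)'=\prod_{j}f'\circ f^{j}$ together with $\hat{h}_{f^n}=\hat{h}_f$, and transfer independence by absorbing the stray powers of $f$ into iterates of $f^n$ after padding $g$ to $f^r\circ g\circ f^s$, which still commutes with $f^{kn}$. All of that matches the paper's argument, and your independence sketch is the right one. (The paper places $f^n$ in $\per{1}{\lambda^n}$ rather than $\per{1}{\lambda}$; since $h(\lambda^n)=nh(\lambda)$, this affects nothing.)

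The one step that does not hold up as written is the height comparison $h_{\polym{d}}(f)\ll h_{\polym{d^n}}(f^n)+O(1)$. You derive it from the claim that the iteration map has finite fibres, ``whence the pullback of an ample height is ample.'' That implication is false in general: quasi-finiteness is not enough (one needs the morphism to be finite, hence also proper), and $\polym{d}$ is only quasi-projective, so even for a finite morphism one must be careful about what an ample Weil height means and how it transforms --- exactly the care the paper takes at the start of Section~\ref{sec:global} when comparing $h_{\mathsf{M}_d}(f_{\mathbf{c}})$ with $h(\mathbf{c})$. Proving that the iteration map on moduli is finite (or otherwise establishing the inequality directly) is a genuine task your sketch does not carry out. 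The paper sidesteps all of this by invoking the main result of \cite{pcfpoly, hcrit}, namely $h_{\polym{d}}\asymp \hat{h}_{\mathrm{crit}}$, together with the identity $\hat{h}_{\mathrm{crit}}(f^n)=n\,\hat{h}_{\mathrm{crit}}(f)$, which yields
\[h(f)\ll \hat{h}_{\mathrm{crit}}(f)=\tfrac{1}{n}\hat{h}_{\mathrm{crit}}(f^n)\ll h(f^n)\]
in one line. Substituting that for your finiteness argument closes the gap; the rest of your proof is sound.
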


\begin{proof}
Let $f\in \per{n}{\lambda}$ and suppose that Theorem~\ref{th:main} is known in the case $n=1$. If $P$ is a point of period $n$ and multiplier $\lambda$ for $f$, then it is a fixed point of multiplier $\lambda^n$ for $f^n$.  Since $f^n\in \per{1}{\lambda^n}$, we have either
\begin{equation}\label{eq:fnbound}h(f^n)\leq Ah(\lambda^n)+B,\end{equation}
or else there are independent critical points $c_1, c_2$ of $f^n$ such that
\[\hat{h}_{f^n}(c_1), \hat{h}_{f^n}(c_2)\geq \epsilon h(f^n).\]
Note that $\hat{h}_f=\hat{h}_{f^n}$, that $h(\lambda^n)=nh(\lambda)$, and that
\[h(f)\ll \hat{h}_{\mathrm{crit}}(f)=\frac{1}{n}\hat{h}_{\mathrm{crit}}(f^n)\ll h(f^n),\]
by the main result of~\cite{pcfpoly}. In particular,~\eqref{eq:fnbound} implies $h(f)\leq A' h(\lambda)+B'$, for some constants $A'$ and $B'$ depending on $d$ and $n$.

Now suppose that~\eqref{eq:fnbound} is not satisfied. 
Note that for each critical point $c$ of $f^n$, we have $f'(f^j(c))=0$ for some $0\leq j<n$, and so we have critical points $\zeta_1=f^{j_1}(c_1)$ and $\zeta_2=f^{j_2}(c_2)$ of $f$ satisfying
\[\hat{h}_f(\zeta_i)=d^{j_i}\hat{h}_f(c_1)\geq d^{j_1}\epsilon h(f^n)\gg h(f).\]
Adjusting the constant $B'$ if necessary, we then have either $h(f)\leq A'h(\lambda)+B'$ again, or else
\[\hat{h}_f(\zeta_i)\geq \delta h(f)\]
for some $\delta>0$.

It now remains to check that the independence of $c_1$ and $c_2$ under $f^n$ implies the independence of $\zeta_1$ and $\zeta_2$ under $f$.
Suppose to the contrary that $g\circ f^k=f^k\circ g$ for some $k\geq 1$, and that $f^a(\zeta_1)=g\circ f^b(\zeta_2)$, taking $b\geq n$ without loss of generality. Choose $0\leq r, s<n$ so that $a+j_1+r$ and $b+j_2-s$ are divisible by $n$. Then 
\[(f^n)^{(a+j_1+r)/n}(c_1)=f^r\circ f^a(\zeta_1)=f^r\circ g\circ f^b(\zeta_2)=f^r\circ g\circ f^s \circ (f^n)^{(b+j_2-s)/n}(c_2).\]
Since $f^r\circ g\circ f^s$ commutes with $f^{kn}$, given that $g$ commutes with $f^k$, we have exhibited a dependence between $c_1$ and $c_2$ under $f^n$.
\end{proof}

Now that we know that we may restrict attention to the case $n=1$, and to the normal form~\eqref{eq:normalform}, we outline the strategy of the proof. If $f_\mathbf{c}$ has any infinite critical orbits at all, we will let $c_1$ be the critical point of maximal canonical height, and attempt to bound from below the sum of $\hat{h}_{f_\mathbf{c}}(c_i)$ for $c_i$ independent of $c_1$. We obtain a non-trivial contribution to this quantity from each place at which the hypotheses of Lemma~\ref{lem:localindep} are met, and so the last ingredient is an estimate of how much these places contribute to the weighted sum defining $h(\mathbf{c})$.

\begin{Lemma}\label{lem:Sweight}
Suppose that $\prod c_i=(-1)^{d-1}\lambda\neq 0$, and let
\begin{equation}\label{eq:Sdef}S=\left\{v\in M_K:5\log|c_1|_v<\log\|\mathbf{c}\|_v\right\}.\end{equation}
Then 
\[\sum_{v\in S}\frac{[K_v:\QQ_v]}{[K:\QQ]}\log^+\|\mathbf{c}\|_v\geq\frac{1}{5d-9}h(\mathbf{c})-\frac{5d-4}{5d-9}h(\lambda).\]
\end{Lemma}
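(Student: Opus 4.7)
The plan is to bound the complementary sum
\[Y:=\sum_{v\in M_K\setminus S} w_v\log^+\|\mathbf{c}\|_v,\qquad w_v:=\frac{[K_v:\QQ_v]}{[K:\QQ]},\]
from above in terms of $X:=\sum_{v\in S}w_v\log^+\|\mathbf{c}\|_v$ and $h(\lambda)$. Since $h(\mathbf{c})=X+Y$, an estimate of the shape $Y\leq (5d-10)X+C\cdot h(\lambda)$ immediately rearranges to the desired inequality.

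Setting $T:=M_K\setminus S$, the definition $5\log|c_1|_v\geq\log\|\mathbf{c}\|_v$ on $T$ forces, at the places $v\in T$ with $\|\mathbf{c}\|_v\geq 1$, both $\log|c_1|_v\geq 0$ and $\log^+\|\mathbf{c}\|_v\leq 5\log|c_1|_v$. Places $v\in T$ with $\|\mathbf{c}\|_v<1$ contribute nothing to $Y$, so writing $T':=\{v\in T:\|\mathbf{c}\|_v\geq 1\}$ gives the estimate $Y\leq 5\sum_{v\in T'}w_v\log|c_1|_v$.

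Now invoke the product formula $\sum_v w_v\log|c_1|_v=0$ to rewrite
\[\sum_{v\in T'}w_v\log|c_1|_v \;=\; \sum_{v\notin T'}w_v(-\log|c_1|_v).\]
The factorization $c_1\cdots c_{d-1}=\pm\lambda$ together with $|c_i|_v\leq\|\mathbf{c}\|_v$ for $i\geq 2$ gives $|c_1|_v\geq |\lambda|_v/\|\mathbf{c}\|_v^{d-2}$ at every place, hence
\[-\log|c_1|_v\;\leq\; -\log|\lambda|_v+(d-2)\log\|\mathbf{c}\|_v.\]
Applied term-by-term for $v\notin T'$, the $(d-2)\log\|\mathbf{c}\|_v$ term is $\leq 0$ at places with $\|\mathbf{c}\|_v<1$ (and may be dropped from an upper bound), while it equals $(d-2)\log^+\|\mathbf{c}\|_v$ at the remaining places, namely those $v\in S$ with $\|\mathbf{c}\|_v\geq 1$. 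This gives
\[\sum_{v\notin T'}w_v(-\log|c_1|_v)\;\leq\;(d-2)X+\sum_{v\notin T'}w_v(-\log|\lambda|_v).\]
A second application of the product formula, now for $\lambda$, identifies the last sum with $\sum_{v\in T'}w_v\log|\lambda|_v\leq \sum_{v\in T'}w_v\log^+|\lambda|_v\leq h(\lambda)$. Collecting, $Y\leq 5(d-2)X+5h(\lambda)$, so $h(\mathbf{c})=X+Y\leq (5d-9)X+5h(\lambda)$, which is stronger than the claimed bound (since $5\leq 5d-4$ for $d\geq 2$).

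The main step to get right is the sign analysis when distributing the multiplicative inequality across the places $v\notin T'$, together with the double use of the product formula: once on $c_1$, in order to trade the partial sum over $T'$ (where no natural bound on $\log|c_1|_v$ is available) for a sum over its complement (where we can leverage the definition of $X$), and once on $\lambda$, to collect the residual $\log|\lambda|_v$ contributions into a single $h(\lambda)$ error term.
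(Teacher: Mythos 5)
Your proof is correct and follows essentially the same route as the paper's: bound the complementary sum $\sum_{v\notin S}$ using the defining inequality of $S$, apply the product formula to $c_1$, and use $c_1\cdots c_{d-1}=\pm\lambda$ to control $|c_1|_v^{-1}$ in terms of $\|\mathbf{c}\|_v$ and $|\lambda|_v$. The only differences are bookkeeping ones (you restrict to the places where $\|\mathbf{c}\|_v\geq 1$ and apply the product formula a second time to $\lambda$, where the paper instead trades $\log^+\|\mathbf{c}\|_v$ for $\log\|\mathbf{c}\|_v$ at a cost of $\frac{1}{d-1}\log^+|\lambda^{-1}|_v$), and you end up with the marginally sharper coefficient $5$ in place of $5d-4$ on $h(\lambda)$.
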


\begin{proof}
To ease notation, we set $n_v=[K_v:\QQ_v]/[K:\QQ]$.
Note that the relation $\prod_{i\geq 1} c_i=\pm\lambda$ gives us both
\[\log\|\mathbf{c}\|_v\leq \log^+\|\mathbf{c}\|_v\leq \log\|\mathbf{c}\|_v+\frac{1}{d-1}\log^+|\lambda|_v\]
and
\[|c_1|_v^{-1}\leq |\lambda^{-1}|_v\|\mathbf{c}\|_v^{d-2}.\]
We apply these and the product formula to obtain
\begin{eqnarray*}
\sum_{v\not\in S}n_v\log^+\|\mathbf{c}\|_v&\leq & \sum_{v\not\in S}n_v\left(\log\|\mathbf{c}\|_v+\frac{1}{d-1}\log^+|\lambda|_v\right)\\ 
&\leq &\sum_{v\not\in S}n_v5\log|c_1|_v+\sum_{v\not\in S}\frac{1}{d-1}n_v\log^+|\lambda|_v\\
&\leq &\sum_{v\in S}n_v5\log|c_1|^{-1}_v + \frac{1}{d-1}h(\lambda)\\
&\leq &\sum_{v\in S}n_v5\left(\log^+|\lambda^{-1}|_v+(d-2)\log^+\|\mathbf{c}\|_v\right)\\&&+\frac{1}{d-1}h(\lambda)\\
&\leq & 5(d-2)h(\mathbf{c})-\sum_{v\not\in S}5(d-2)n_v \log^+\|\mathbf{c}\|_v \\&&+ \left(5+\frac{1}{d-1}\right) h(\lambda),
\end{eqnarray*}
and so
\[\sum_{v\not\in S}n_v\log^+\|\mathbf{c}\|_v\leq \frac{5(d-2)}{1+5(d-2)}h(\mathbf{c})+\frac{5d-4}{1+5(d-2)}h(\lambda),\]
whereupon
\begin{equation}\label{eq:cbound}\sum_{v\in S}n_v\log^+\|\mathbf{c}\|_v\geq \frac{1}{5d-9}h(\mathbf{c})-\frac{5d-4}{5d-9}h(\lambda).\end{equation}
\end{proof}


\begin{proof}[Proof of Theorem~\ref{th:main}]
As noted above, it will suffice to prove the result with $n=1$ for polynomials of the form $f_{\mathbf{c}}$, taking $h(f_{\mathbf{c}})=h(\mathbf{c})$. In the case $d=2$, we have $\mathbf{c}=c_1=-\lambda$ on $\per{1}{\lambda}$, and so the result follows immediately. We will assume from now on that $d\geq 3$.

If $f_\mathbf{c}$ is post-critically finite (PCF), then we have from \cite{pcfpoly} a bound on $h(\mathbf{c})$, so the conclusion of the theorem holds. Suppose that $f_{\mathbf{c}}$ is not PCF, and without loss of generality suppose that $c_1$ has maximal canonical height (which is positive). It follows  that $c_1$ is a maximal representative of its dependency class, $D$, and in particular that $G_{f_{\mathbf{c}}, v}(c_1)\geq G_{f_{\mathbf{c}}, v}(c_i)$ for any $c_i$ dependent on $c_1$, and any place $v\in M_K$.

Note that we might have $D=\{c_1, ..., c_{d-1}\}$, in which case we must bound $h(\mathbf{c})$. Let $S$ be the set of places defined in~\eqref{eq:Sdef}, and note that by Lemma~\ref{lem:localindep} we have, for each $v\in S$,
\[\log^+\|\mathbf{c}\|_v \leq C_{4, v}.\]
Applying Lemma~\ref{lem:Sweight}, we have a constant $C_{21}$ such that
\begin{eqnarray*}
\frac{1}{5d-9} h(\mathbf{c})-\frac{5d-4}{5d-9}h(\lambda) &\leq & \sum_{v\in S}n_v\log^+\|\mathbf{c}\|_v\\
&\leq & \sum_{v\in S}n_v C_{4, v},
\end{eqnarray*}
and hence
\[h(\mathbf{c})\leq (5d-4)h(\lambda)+(5d-9)\sum_{v\in M_K}n_v C_{4, v}.\]

Now, if at least some critical point is independent of $c_1$, Lemma~\ref{lem:localindep} furnishes at each place $v\in S$ a critical point $c_{i_v}\not\in D$ with
\[G_{f_{\mathbf{c}}, v}(c_{i_v})\geq \log^+\|\mathbf{c}\|_v-C_{4, v}.\]
By the non-negativity of $G_v$, we have
\[\sum_{c_i\not\in D}G_{f_{\mathbf{c}}, v}(c_{i_v})\geq \log^+\|\mathbf{c}\|-C_{4, v}\]
at every place $v\in S$. Again using the non-negativity of $G_{f_{\mathbf{c}}, v}$, and Lemma~\ref{lem:Sweight}, we have
\begin{eqnarray*}
\frac{1}{5d-9} h(\mathbf{c})&\leq& \sum_{v\in S} n_v\log^+\|\mathbf{c}\|_v + \left(\frac{5d-4}{5d-9}\right)h(\lambda)\\
&\leq & \sum_{v\in S} n_v\sum_{c_i\not\in D} n_vG_{f_{\mathbf{c}}, v}(c_i) + \sum_{v\in S} n_v C_{4, v}+ \left(\frac{5d-4}{5d-9}\right)h(\lambda) \\
&\leq & \sum_{c_i\not\in D}\sum_{v\in M_K} n_v G_{f_{\mathbf{c}}, v}(c_i)+ \sum_{v\in M_K} n_v C_{4, v}+ \left(\frac{5d-4}{5d-9}\right)h(\lambda)\\
&=& \sum_{c_i\not\in D}\hat{h}_{f_{\mathbf{c}}}(c_i)+ \sum_{v\in M_K} n_v C_{4, v}+ \left(\frac{5d-4}{5d-9}\right)h(\lambda)\\
&\leq& (d-2)\max_{c_i\not\in D}\hat{h}_{f_{\mathbf{c}}}(c_i)+ \sum_{v\in M_K} n_v C_{4, v}+ \left(\frac{5d-4}{5d-9}\right)h(\lambda).
\end{eqnarray*}
So for some $c_i$ independent of $c_1$, we have 
\begin{eqnarray*}
\hat{h}_{f_\mathbf{c}}(c_i)&\geq&\frac{1}{(d-2)(5d-9)} h(\mathbf{c})- \left(\frac{5d-4}{(d-2)(5d-9)}\right)h(\lambda) \\&&- \left(\frac{1}{d-2}\right) \sum_{v\in M_K} n_v C_{4, v}\\
&>&\frac{1}{2(d-2)(5d-9)} h(\mathbf{c})
\end{eqnarray*}
unless \begin{equation}\label{eq:hboundeq}h(\mathbf{c})\leq (10d-8)h(\lambda)+ 2(5d-9) \sum_{v\in M_K} n_v C_{4, v}.\end{equation}
In other words, we have either~\eqref{eq:hboundeq} or else $f_{\mathbf{c}}$ has independent critical points $c_1$ and $c_2$ satisfying
\[\min\{\hat{h}_{f_{\mathbf{c}}}(c_1), \hat{h}_{f_{\mathbf{c}}}(c_2)\}>\left(\frac{1}{2(d-2)(5d-9)}\right)h(\mathbf{c}),\]
which is what we set out to prove.

As noted, this proves Theorem~\ref{th:main} for conjugacy classes in $\per{1}{\lambda}$ containing a polynomial of the form $f_{\mathbf{c}}$, with $\mathbf{c}\in K^{d-1}$. But the bounds are independent of $K$, and so hold over any finite extension of $K$, and hence over $\overline{K}$. As noted, every conjugacy class in $\per{1}{\lambda}\subseteq \polym{d}$ is the conjugacy class of some $f_{\mathbf{c}}$ over $\overline{K}$.
\end{proof}

\section{Algebraic families in $\per{n}{\lambda}$}\label{sec:function}

Let $k$ be an algebraically closed field of characteristic $0$ or $p>d$, and let $U/k$ be an irreducible quasi-projective variety. Given a set of absolute values $M$ on the function field $K=k(U)$ satisfying the product formula, that is $\prod_{v\in M}|x|_v=1$ for all $x\neq 0$, we define as usual
\[h_{\PP^N, M}([x_0:\cdots :x_N])=\sum_{v\in M}\log^+\|x_0, ..., x_N\|_v.\]
The following standard result asserts that we may always choose such a set of absolute values so that the points of height zero are exactly those defined over the constant field.
\begin{Lemma}\label{lem:ffheights}
Let $k$ be algebraically closed, and let $U/k$ be an irreducible quasi-projective variety. There exists a set $M_K$ of non-archimedean, non-$p$-adic absolute values on $K=k(U)$ satisfying the product formula.
Futhermore, there is a canonical extension of the places in $M_K$ to any finite extension $L/K$ such that $h$ is well-defined on $\overline{K}$, and we have
\[\{P\in \PP^N(\overline{K}):h_{\PP^N, M}(P)=0 \}=\PP^N(k).\]
\end{Lemma}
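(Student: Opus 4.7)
The plan is to build the places of $M_K$ from prime divisors on a projective model of $U$, weighted so that the product formula reduces to a standard intersection-theoretic identity. First I would fix a normal projective variety $X/k$ with $k(X) = K$ (for instance a projective compactification of the normalization of $U$), and an ample class $H$ on $X$. Each prime divisor $D \subset X$ is a codimension-one point whose local ring is a discrete valuation ring, giving a normalized valuation $\ord_D$ on $K$ that is trivial on $k^\times$; the associated absolute value $|x|_{v_D} = e^{-\ord_D(x)}$ is thus automatically non-archimedean and non-$p$-adic. I would weight $v_D$ by the intersection number $m_D = (D \cdot H^{\dim X - 1})$, so that the resulting projective height equals, up to normalization, the degree (with respect to $H$) of the pullback of the hyperplane class to $X$.

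The product formula $\sum_D m_D \ord_D(f) = 0$ for $f \in K^\times$ is then just the numerical triviality of the principal divisor $\operatorname{div}(f)$, i.e.\ $(\operatorname{div}(f) \cdot H^{\dim X - 1}) = 0$. For the height-zero claim, suppose $h_{\PP^N, M_K}(P) = 0$ for $P = [x_0:\cdots:x_N] \in \PP^N(K)$. After normalizing $x_i = 1$ for some index $i$, each $x_j$ must satisfy $\ord_D(x_j) \geq 0$ at every prime divisor $D$, for otherwise that place would contribute strictly to the sum. But a rational function on a normal projective variety $X/k$ with no poles is a global section of $\mathcal{O}_X$, hence lies in $H^0(X, \mathcal{O}_X) = k$; so every $x_j \in k$ and $P \in \PP^N(k)$, while the reverse containment is immediate.

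To extend the construction to $\overline{K}$, for a finite $L/K$ I would take the normalization $\pi : Y \to X$ of $X$ in $L$, which is again normal projective with $k(Y) = L$, and assign each prime divisor $E \subset Y$ the weight $m_E = (E \cdot (\pi^* H)^{\dim Y - 1})$. The projection formula $\pi_*((\pi^* H)^{\dim Y - 1}) = [L:K]\, H^{\dim X - 1}$ translates into the compatibility $\sum_{E \mid D} e_{E/D}[\kappa(E):\kappa(D)]\, m_E = [L:K]\, m_D$, so heights glue well along finite extensions and yield a well-defined height on $\PP^N(\overline{K})$ in the direct limit over all finite $L \subset \overline{K}$. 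The main obstacle is purely bookkeeping, namely matching normalization conventions so that the product formula, the integrality of the weights, and base-change compatibility all hold simultaneously; nothing deeper than $H^0(X, \mathcal{O}_X) = k$ for $X$ projective, connected and reduced over the algebraically closed field $k$, together with the numerical triviality of principal divisors, is required.
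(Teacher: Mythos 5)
Your proposal is correct and follows essentially the same route as the paper, which constructs the places from prime divisors on a normal projective model weighted by their degree against an ample class, derives the product formula from the numerical triviality of principal divisors, and deduces the height-zero statement from the fact that a pole-free rational function on a normal projective variety is constant (the paper otherwise defers to Bombieri--Gubler, Section~1.4 and Example~2.4.11, including for the extension to finite extensions which you handle via normalization and the projection formula). The only cosmetic difference is that the paper absorbs the weight $\deg(Z)$ into the exponent of the absolute value rather than carrying it as a separate multiplicity.
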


\begin{proof}
See \cite[Section~1.4 and Example~2.4.11]{bg}, but we remind the reader here that if $U$ is normal and projective, then the absolute values in $M_K$ correspond to prime divisors $Z$ on $X$. We set
\[|x|_Z=e^{-\ord_{Z}(x)\deg(Z)},\]
where $\deg(Z)$ is the degree of $Z$ relative to some chosen ample class on $X$. It follows that $h([x:y])$ is the degree of the pole divisor of $x/y$ relative to $Z$, and since $U$ is normal and projective, only constants have trivial pole divisors. 

In general, $K$ is $k$-isomorphic to the function field of some normal, projective variety, so it suffices to consider that case, although the abundance of non-isomorphic projective normalizations of $U$ suggests correctly that $M_K$ is not itself canonical.
\end{proof}

We note that if $M$ is a absolute values as furnished by Lemma~\ref{lem:ffheights}, then  $|x|_v=1$ for any $x\in k^\times$. In particular, in the terminology of Section~\ref{sec:local}, $R$ is $v$-integral for every $v\in M$.

\begin{Lemma}\label{lem:cink}
Let $\lambda\in k^\times\subseteq K^\times$, and let $\mathbf{c}\in \AA^{d-1}(K)$ satisfy $c_1c_2\cdots c_{d-1}=(-1)^{d-1}\lambda$. Then either $f_{\mathbf{c}}$ has two independent, infinite critical orbits, or else $\mathbf{c}\in \AA^{d-1}(k)$. 
\end{Lemma}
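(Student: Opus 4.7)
The plan is to run the proof of Theorem~\ref{th:main} essentially verbatim in the function field $K=k(U)$, using the absolute values $M_K$ provided by Lemma~\ref{lem:ffheights}, and to observe that in this context the constants appearing in every intermediate bound collapse to zero. The case $d=2$ is immediate: then $c_1=-\lambda\in k^\times$ forces $\mathbf{c}\in\AA^1(k)$ directly, so assume $d\geq 3$.

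Two features of the function field setting drive the simplification. First, $\lambda\in k^\times$ has $h(\lambda)=0$, since Lemma~\ref{lem:ffheights} characterizes height-zero points as exactly those defined over $k$. Second, every $v\in M_K$ is non-archimedean and non-$p$-adic for any $p\leq d$, so $R$ is $v$-integral at every place, and the local constants $C_{1,v},\ldots,C_{4,v}$ appearing in Lemmas~\ref{lem:greensprops} and~\ref{lem:localindep} may all be taken to be zero. Substituting $h(\lambda)=0$ and $C_{4,v}=0$ into the final inequality
\[h(\mathbf{c})\leq (10d-8)h(\lambda)+2(5d-9)\sum_{v\in M_K}n_v C_{4,v}\]
from the proof of Theorem~\ref{th:main} collapses it to $h(\mathbf{c})\leq 0$, whence $h(\mathbf{c})=0$ and $\mathbf{c}\in\AA^{d-1}(k)$ by Lemma~\ref{lem:ffheights}. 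The alternative output of that proof is the existence of two independent critical points $c_1,c_2$ of $f_{\mathbf{c}}$ satisfying
\[\min\bigl\{\hat{h}_{f_{\mathbf{c}}}(c_1),\hat{h}_{f_{\mathbf{c}}}(c_2)\bigr\}>\frac{1}{2(d-2)(5d-9)}h(\mathbf{c}).\]
If $h(\mathbf{c})=0$ we are once more in the $\mathbf{c}\in\AA^{d-1}(k)$ case; otherwise both canonical heights are strictly positive, so neither $c_1$ nor $c_2$ is preperiodic under $f_{\mathbf{c}}$ and both critical orbits are infinite, which is the desired conclusion.

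The one step I expect to require some care is the PCF base case that enters the proof of Theorem~\ref{th:main} via~\cite{pcfpoly}, since what we actually need is that the bound on $h(\mathbf{c})$ in the PCF case also degenerates to $h(\mathbf{c})\leq 0$ over our function field. This ought to follow because the argument of~\cite{pcfpoly} likewise rests on local inequalities whose constants vanish at $v$-integral places; but if one prefers not to appeal to that, the PCF case can be handled directly by running the chain of inequalities in the proof of Theorem~\ref{th:main} with each $\hat{h}_{f_{\mathbf{c}}}(c_i)=0$, which still forces $h(\mathbf{c})=0$ and hence $\mathbf{c}\in\AA^{d-1}(k)$.
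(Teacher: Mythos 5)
Your proposal is correct and follows the same route as the paper: Lemma~\ref{lem:localindep} with $C_4=0$ at the $v$-integral places furnished by Lemma~\ref{lem:ffheights}, combined with the product-formula estimate (the content of Lemma~\ref{lem:Sweight} with $h(\lambda)=0$) to conclude that either $h(\mathbf{c})=0$ or two independent critical points have positive canonical height. The paper's own write-up is simply a leaner, self-contained version of this degeneration --- it never invokes the PCF bound from \cite{pcfpoly}, instead first producing one infinite critical orbit and then relabelling and re-running the local argument to obtain an independent second one --- and your fallback for the PCF case is precisely the observation that makes that possible.
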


\begin{proof}
With $h$ a height relative to a set $M$ of places as provided by Lemma~\ref{lem:ffheights}, the condition $\mathbf{c}\in \AA^{d-1}(k)$ is equivalent to $h(\mathbf{c})=0$,  and so we will work in terms of heights. Note that our assumptions also imply that $|\lambda|_v=1$ for all $v\in M$.

Let $S\subseteq M$ be the set of places $v$ witnessing $5\log|c_1|_v<\log\|\mathbf{c}\|_v$. If there exists a place $v\in S$ with $\log^+\|\mathbf{c}\|_v\neq 0$,  we may apply Lemma~\ref{lem:localindep} to conclude that $f_{\mathbf{c}}$ has a critical point $c_i$ independent of $c_1$ satisfying $G_{f_{\mathbf{c}}, v}(c_i)\geq \log^+\|\mathbf{c}\|_v> 0$, and so a critical point independent of $c_1$ with an infinite orbit.
 
On the other hand, suppose that we have $\log^+\|\mathbf{c}\|_v=0$ for each $v\in S$. Our hypothesis $\lambda\in k^\times$ ensures that $\log\|\mathbf{c}\|_v=\log^+\|\mathbf{c}\|_v$ and $c_1\neq 0$, so by the product formula we have
\begin{eqnarray*}
h(\mathbf{c})&=&\sum_{v\in M}\log^+\|\mathbf{c}\|_v\\&=&\sum_{v\not\in S}\log^+\|\mathbf{c}\|_v\\
&\leq &5\sum_{v\not\in S}  \log|c_1|_v\\
&=&5\sum_{v\in S}\log|c_1^{-1}|_v\\
&=&5\sum_{v\in S}\log|c_2\cdots c_{d-1}|_v\\
&\leq &5(d-2)\sum_{v\in S}\log^+\|\mathbf{c}\|_v=0.
\end{eqnarray*}

So if $\mathbf{c}\not\in\AA^{d-1}(k)$, then $f_{\mathbf{c}}$ has at least one infinite critical orbit. Re-arranging the indices so that this critical point is $c_1$, we may run the argument through again to find some $c_i$ independent of $c_1$ which also has an infinite forward orbit.
\end{proof}

Recall that a polynomial $f$ defined over a function field $K$ with algebraically closed constant field $k$ is \emph{isotrivial} if and only if it is conjugate over some extension of $K$ to a polynomial with constant coefficients.

\begin{proof}[Proof of Theorem~\ref{th:geom}]
As in the case of number fields, we will first show that it suffices to treat the case $n=1$. Let $K=k(U)$, and let $f\in \per{n}{\lambda}$ be the generic fibre of the family, with $\lambda\in k^\times$. Since $f^n\in \per{1}{\lambda^n}$, we see that either $f^n$ isotrivial (defined over $k$ after a change of variables), or else $f^n$ has two independent infinite critical orbits. In the latter case, one concludes as in the proof of Lemma~\ref{lem:ntoone} that $f$ does as well. But if $f^n$ is isotrivial, then so is  $f$ (see~\cite{benedetto}; the hypothesis therein that $\dim(U)=1$ is superfluous, e.g., see~\cite{baker}).

Now suppose that $f\in \per{1}{\lambda}$ over $K$, with $\lambda\in k^\times$, and suppose that $f$ does not have two independent, infinite critical orbits.  Over some extension of $K$,  $f$ is conjugate to $f_\mathbf{c}$ with $c_1c_2\cdots c_{d-1}=(-1)^{d-1}\lambda$, and hence by Lemma~\ref{lem:cink} we have $\mathbf{c}\in \AA^{d-1}(k)$. Since $f_{\mathbf{c}}$ has constant coefficients and is conjugate to $f$, $f$ is isotrivial.
\end{proof}

\begin{Remark}
Note that while the condition $\hat{h}_{\mathrm{crit}}(f)=0$ over a number field precisely identifies PCF polynomials, in the function field context it precisely identifies those that are isotrivial, that is, conjugate over some extension of $K$ to a polynomial with constant coefficients.

In one direction this is relatively straightforward. If $f$ is defined over the constant field, then $\hat{h}_f(z)=0$ for all $z\in k\subseteq K$, simply because $f(k)\subseteq k$ and $h(z)=0$ for all $z\in k$. Since a $k$-rational polynomial has $k$-rational critical points, it follows that $\hat{h}_{\mathrm{crit}}(f)=0$ for $f$ defined over $k$, and since $\hat{h}_{\mathrm{crit}}$ is well-defined on conjugacy classes, the same is true for any polynomial conjugate to one with constant coefficients.

On the other hand, if $\hat{h}_{\mathrm{crit}}(f)=0$ then $f$ is conjugate to a map of the form $f_{\mathbf{c}}$ with $\mathbf{c}\in \AA^{d-1}(k)$, by the proof of Theorem~\ref{th:geom}. Alternatively, over $k=\CC$, we could appeal to the compactness of the connectedness locus, or Thurston's rigidity theorem to conclude that any PCF family of polynomials is isotrivial.

Similarly, the argument that proves Theorem~\ref{th:geom} shows that for $\lambda\in k^{\times}$ and $f\in \per{n}{\lambda}\subseteq \polym{d}$, we have $f$ isotrivial if and only if $\hat{h}_{\mathrm{crit}}^{(1)}(f)=0$.

This property is a natural one for heights over functions fields. Indeed, suppose that $L$ is an ample divisor on $\mathsf{M}_d$ and $\psi:\mathsf{M}_d\to \PP^N$ is an embedding relative to which $nL$ corresponds to the hyperplane $H\subseteq \PP^N$ at infinity, for some $N, n\geq 1$. Then define $h_{\mathsf{M}_d, L}=\frac{1}{n}\psi^* h_{\PP^N, H}$. If $L$ is $k$-rational, and we choose $\psi$ to be as well, then $h_{\mathsf{M}_d, L}(f)=0$ if and only if $f\in \psi^{-1}(\PP^N(k))=\mathsf{M}_d(k)$. In other words, for ample Weil heights on $\mathsf{M}_d$ constructed in this manner, $h_{\mathsf{M}_d, L}(f)=0$ if and only if $f$ is isotrivial.
\end{Remark}

\section{Quadratic morphisms}\label{sec:quad}

In this section we treat the case of rational functions of degree 2, with the aim of proving Theorem~\ref{th:quad}. We work over a number field $K$  with the usual set of places $M_K$. 

 From now on, let
\[f_{\lambda_0, \lambda_\infty}(z)=\frac{\lambda_0 z+z^2}{\lambda_\infty z + 1}.\] Over $\overline{K}$, every quadratic endomorphism of $\mathbb{P}^1$ is conjugate either to one of this form, as shown by Milnor~\cite{milnor} and Silverman~\cite{md} (see also \cite[Section~4.2]{ads}), or to a member of a one-parameter family treated separately below.
We will think of $\lambda_0$ as being fixed, but explicit dependence on this value will be tracked under the hypothesis only that $\lambda_0\neq 0$. The following result is enough to establish Theorem~\ref{th:quad}, modulo the separate handling of the one-parameter family.
\begin{Lemma}\label{th:quad2}
For $\lambda_0\neq 0$, and $\zeta_1, \zeta_2$ the critical points of $f_{\lambda_0, \lambda_\infty}$, we have
\[\min\left\{\hat{h}_{f_{\lambda_0, \lambda_\infty}}(\zeta_1), \hat{h}_{f_{\lambda_0, \lambda_\infty}}(\zeta_2) \right\}\geq \frac{1}{32}h(\lambda_\infty)-\frac{25}{32}h(\lambda_0)-\frac{47}{32}\log 2-\frac{3}{16}\log 3.\]
\end{Lemma}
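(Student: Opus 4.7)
The plan is to adapt the local-inequality plus weight-estimate strategy of Sections~\ref{sec:local}--\ref{sec:global} to this rational-function setting. The new difficulty is that $f_{\lambda_0,\lambda_\infty}$ has non-trivial dynamics at both fixed points $0$ and $\infty$, so the local analysis must be carried out symmetrically via a homogeneous lift rather than through a polynomial escape function at $\infty$ alone.

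First I would record the algebraic constraints on the critical points. Since $\zeta_1, \zeta_2$ are roots of $\lambda_\infty z^2 + 2z + \lambda_0$, we have $\zeta_1 + \zeta_2 = -2/\lambda_\infty$, $\zeta_1 \zeta_2 = \lambda_0/\lambda_\infty$, and a direct computation using the quadratic relation yields the clean identity $f_{\lambda_0,\lambda_\infty}(\zeta_1)\, f_{\lambda_0,\lambda_\infty}(\zeta_2) = \lambda_0^2/\lambda_\infty^2$. These symmetric identities are what will ultimately allow the \emph{minimum} of the two heights, not just the maximum, to be controlled, through Galois-averaging when $1 - \lambda_0\lambda_\infty$ is a non-square in $K$ and a direct symmetry argument otherwise.

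Next I would work with the degree-$2$ homogeneous lift $F(X,Y) = (X(X + \lambda_0 Y),\, Y(Y + \lambda_\infty X))$ and its local Green's functions $\hat{G}_{F,v}(X,Y) = \lim_n 2^{-n}\log\|F^n(X,Y)\|_v$, whose weighted sum over places computes $\hat{h}_{f_{\lambda_0,\lambda_\infty}}([X:Y])$. At each place $v$ I would establish an explicit lower bound of the form $\hat{G}_{F,v}(\zeta_i, 1) \geq \frac{1}{k}\log^+|\lambda_\infty|_v - c_v$ for an explicit rational $k$ and an error $c_v$ controlled by $\log^+|\lambda_0|_v$, $|2|_v$, and $|3|_v$, and vanishing wherever $F$ has sufficiently good reduction. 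After summing these against a weight estimate in the style of Lemma~\ref{lem:Sweight} (using the relation $\zeta_1\zeta_2\lambda_\infty = \pm\lambda_0$ and the product formula to show that the places with $|\lambda_\infty|_v$ large contribute a positive proportion of $h(\lambda_\infty)$), the stated inequality with the explicit coefficients $\tfrac{1}{32}$, $\tfrac{25}{32}$, $\tfrac{47}{32}\log 2$, and $\tfrac{3}{16}\log 3$ should emerge.

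The main obstacle is the local step. At a non-archimedean place $v$ with $|\lambda_\infty|_v$ large, one critical point typically has $|\zeta_1|_v \sim |\lambda_0|_v$ while the other has $|\zeta_2|_v \sim 1/|\lambda_\infty|_v$, so the two behave very differently $v$-adically, and one may lie in an attracting basin of $0$ when $|\lambda_0|_v < 1$ rather than escaping to $\infty$. Establishing a single uniform lower bound on $\hat{G}_{F,v}$ that covers both critical points requires carefully tracking a bounded number of $F$-iterates on $(\zeta_i, 1)$ in both homogeneous coordinates, and using the symmetric identities from Step~1 to recover the contribution from the smaller critical point. The $\log 2$ and $\log 3$ contributions trace back to $2$- and $3$-adic places, where the discriminant $1-\lambda_0\lambda_\infty$ and the binomial expansion of its square root (used implicitly in locating the $\zeta_i$) behave irregularly and force the pessimistic constants that appear in the final bound.
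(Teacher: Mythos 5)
Your global architecture matches the paper's: a local escape estimate at a distinguished set of places, a mass estimate in the style of Lemma~\ref{lem:Sweight} showing those places carry a definite fraction of $h(\lambda_\infty)$, and a symmetry transferring the bound from one critical point to the other. The symmetry step is essentially the right idea (the paper introduces $w$ with $\lambda_0\lambda_\infty w^2+2w+1=0$ and uses the involution $w\mapsto -w/(2w+1)$, which swaps the branch points $\xi_i=-\zeta_i^2$ while fixing $\lambda_0,\lambda_\infty$, so any bound depending only on $h(\lambda_0),h(\lambda_\infty)$ applies to both). But the heart of the proof --- the local inequality --- is absent from your write-up, and the version you aim for points the wrong way. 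You seek a per-place bound $\hat{G}_{F,v}(\zeta_i,1)\geq \frac{1}{k}\log^+|\lambda_\infty|_v-c_v$, i.e.\ a dynamical contribution at places where $\lambda_\infty$ is $v$-adically \emph{large}. At such a place (say $v$ non-archimedean, $v\nmid 2$, $|\lambda_0|_v=1$, $|\lambda_\infty|_v$ large) one has $|\zeta_1|_v\approx |\lambda_0|_v$, $|\zeta_2|_v\approx 1/|\lambda_\infty|_v$, both branch points satisfy $|f(\xi_i)|_v\leq 1/|\lambda_\infty|_v$, and the orbits typically remain in the unit disk around the fixed point $0$ thereafter; the naive local contribution $\lim 2^{-n}\log^+|f^n(\xi_i)|_v$ is then zero, and whatever positive value the homogeneous $\hat{G}_{F,v}$ takes there is a normalization artifact shared by bounded preperiodic points, so it cannot certify escape. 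The contribution the paper actually extracts lives at the places where $\lambda_\infty$ is $v$-adically \emph{small} and $|w|_v$ is large, where $\xi_1=-\lambda_0^2w^2$ is so large that it visibly escapes to $\infty$ with $g_{f,v}(f^k(\xi_1),\infty)\gtrsim k\log^+|\lambda_\infty^{-1}|_v$; Lemma~\ref{lem:halfheight} then shows these places support half of $h(\lambda_\infty)=h(\lambda_\infty^{-1})$. You flag the attracting-basin problem as ``the main obstacle,'' but your proposed fix --- using $f(\zeta_1)f(\zeta_2)=\lambda_0^2/\lambda_\infty^2$ to ``recover the contribution from the smaller critical point'' at the same place --- cannot work: at a place where one branch point is trapped near $0$ there is nothing to recover. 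The correct resolution is that the two branch points escape at two \emph{different} sets of places, each of which separately carries enough of $h(\lambda_\infty)$ by the involution symmetry.

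Two smaller points. First, the explicit constants do not ``emerge'' automatically: they come from tracking exactly $k=3$ iterates before invoking the crude lower bound of \cite[Lemma~20]{hcrit}, yielding $2^{k+1}\hat{h}(\zeta_i)\geq(\tfrac{k}{2}-2)h(\lambda_\infty)-(\tfrac{7k}{2}+2)h(\lambda_0)-(\tfrac{15k}{2}+1)\log 2-k\log 3$ and optimizing over $k$; your sketch does not set up the quantity being optimized. Second, the $\log 3$ traces to the per-iterate error term $\epsilon_v$ in the escape estimate (the $k\log 12$ in~\eqref{eq:quadk}), not to $3$-adic irregularity of the discriminant $1-\lambda_0\lambda_\infty$.
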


Note that we may as well assume that $\lambda_\infty\neq 0$. In order to speak about the critical points, we introduce a variable $w$ satisfying
\[\lambda_0\lambda_\infty w^2+2w+1=0,\]
after which $f_{\lambda_0, \lambda_\infty}$ has critical points
\[\zeta_1=\lambda_0 w, \zeta_2=\frac{\lambda_0 w}{(2w+1)}\]
and branch points $\xi_i=-\zeta_i^2$, so
\[\xi_1=-\lambda_0^2w^2, \xi_2=\frac{-\lambda_0^2w^2}{(2w+1)^2}.\]
Note that it suffices to obtain a lower bound on $\hat{h}_{f_{\lambda_0, \lambda_\infty}}(\xi_1)$ in terms of $h(\lambda_0)$ and $h(\lambda_\infty)$, since the branch points are swapped by the involution $w\mapsto -w/(2w+1)$ which fix $\lambda_0$ and $\lambda_\infty$. We will assume, without loss of generality, that $\lambda_0$, $\lambda_\infty$, and $w$ are $K$-rational, since the constants we derive do not depend on $K$.

We first note the following lemma, quoted directly from~\cite{hcrit}.
\begin{Lemma}[{\cite[Lemma~20]{hcrit}}]\label{lem:quadgbound}
For any $v\in M_K$ and $z\in K_v$, 
\[g_{f_{\lambda_0, \lambda_\infty}, v}(z, \infty)\geq \log^+|z|_v-2\log\|1, \lambda_0, \lambda_\infty\|_v-\frac{3}{2}\log|1-\lambda_0\lambda_\infty|-\log^+|2|_v.\]
\end{Lemma}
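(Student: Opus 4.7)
This lemma is quoted verbatim from \cite[Lemma~20]{hcrit}, so strictly speaking no proof is required here; I will outline the approach one would use to prove it.  The strategy is to pass from $\PP^1$ to a homogeneous lift on $\AA^2$, where the Green's function becomes a dynamical Green's function that can be compared directly to the sup-norm via a single-step expansion inequality.  Concretely, I would lift $f = f_{\lambda_0, \lambda_\infty}$ to
\[F(X, Y) = \bigl(X^2 + \lambda_0 XY,\; Y^2 + \lambda_\infty XY\bigr),\]
a homogeneous polynomial self-map of $\AA^2$ of degree $2$.  A direct Sylvester-matrix computation shows that the homogeneous resultant of the two components of $F$ equals exactly $1 - \lambda_0\lambda_\infty$, which accounts for the appearance of that quantity in the bound.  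With $\infty = [1{:}0]$ fixed by $f$, lifting $z$ to $(z, 1)$ identifies $g_{f, v}(z, \infty)$ with $\hat{G}_{F, v}(z, 1) := \lim_{n\to\infty} 2^{-n}\log\|F^n(z, 1)\|_v$, and $\log\|z, 1\|_v = \log^+|z|_v$.

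The essential step is a one-sided lower bound for $F$ of the shape
\[\log\|F(X,Y)\|_v \;\geq\; 2\log\|X,Y\|_v \;+\; \log|1-\lambda_0\lambda_\infty|_v \;-\; 2\log\|1, \lambda_0, \lambda_\infty\|_v \;-\; \log^+|2|_v,\]
which follows from an effective Bezout identity: since the components $F_1, F_2$ are coprime with resultant $1-\lambda_0\lambda_\infty$, there exist explicit $A_{ij}\in\ZZ[\lambda_0,\lambda_\infty][X,Y]$, homogeneous of degree $2d-1 = 3$ in $(X,Y)$, with
\[(1-\lambda_0\lambda_\infty)X^3 = A_{11}F_1+A_{12}F_2,\qquad (1-\lambda_0\lambda_\infty)Y^3 = A_{21}F_1+A_{22}F_2.\]
Estimating these combinations by the triangle inequality after dividing out $\|X,Y\|_v^{d-1}$ yields the displayed lower bound; the $\log^+|2|_v$ and the $2\log\|1, \lambda_0, \lambda_\infty\|_v$ terms bound, respectively, the two-term sum and the coefficient norms of the $A_{ij}$.

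The final step is to iterate: apply the one-step bound along the orbit $(X_n, Y_n) = F^n(X, Y)$, divide by $2^{n+1}$, and telescope the geometric series to produce a lower bound on $\hat{G}_{F, v}(X, Y) - \log\|X, Y\|_v$ of exactly the form appearing in the lemma; specializing to $(X, Y) = (z, 1)$ then finishes the proof.  The main obstacle is the bookkeeping needed to recover the coefficients precisely as stated; in particular the factor $\tfrac{3}{2}$ in front of $\log|1-\lambda_0\lambda_\infty|$ must come either from a sharper Bezout identity producing a higher power of the resultant, or from absorbing one extra iterate of $F$ into the initial comparison so that the geometric-series factor becomes $1+\tfrac{1}{2}=\tfrac{3}{2}$ rather than the naive $\sum_{n\geq 1}2^{-n}=1$.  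Everything else in the argument is routine triangle-inequality manipulation on the lifts.
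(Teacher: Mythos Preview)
The paper gives no proof of this lemma; it is imported verbatim as \cite[Lemma~20]{hcrit}, exactly as you note in your first sentence. Your sketched argument---lift to a homogeneous map $F$ on $\AA^2$, obtain a one-step lower bound on $\log\|F(X,Y)\|_v$ from an explicit Nullstellensatz/resultant identity (with $\Res(F)=1-\lambda_0\lambda_\infty$), and telescope---is the standard route to such inequalities and is the approach used in the cited source, so there is nothing to compare here.

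One small caution on the bookkeeping you flag at the end: the sign and the $\tfrac{3}{2}$ in front of the resultant term do not fall out of the naive telescoping of your displayed one-step bound (which would produce a coefficient $+1$ rather than $-\tfrac{3}{2}$); to match the constants exactly you would need to follow the specific normalizations and intermediate estimates in \cite{hcrit} rather than the generic argument. Since the present paper only invokes the lemma as a black box, this does not affect anything downstream.
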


Now, we define a set of places which depends on $\lambda_0$, $\lambda_\infty$, and $w$. Let \[C_v=\begin{cases}\log 2 & \text{if }v\text{ is archimedean or 2-adic}\\ 0 & \text{otherwise}.\end{cases}\]
and
\begin{equation}\label{eq:Sdefff}S=\left\{v\in M_K:\log |w|_v>\log^+|\lambda_0^{-1}|_v+C_v\right\}.\end{equation}

\begin{Lemma}
For each $k\geq 1$ and each $v\in S$, we have
\begin{multline*}g_{f_{\lambda_0, \lambda_\infty}, v}(f_{\lambda_0, \lambda_\infty}^k(\xi_1), \infty)\\\geq k(\log^+|\lambda_\infty^{-1}|_v-\epsilon_v) -2\log\|1, \lambda_0, \lambda_\infty\|_v-\frac{3}{2}\log|1-\lambda_0\lambda_\infty|-\log^+|2|_v.\end{multline*}
\end{Lemma}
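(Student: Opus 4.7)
The plan is to iteratively track the escape of $\xi_1$ under $f=f_{\lambda_0,\lambda_\infty}$ at a place $v \in S$, then apply Lemma~\ref{lem:quadgbound} to convert a lower bound on $|f^k(\xi_1)|_v$ into the claimed lower bound on the local Green function. Writing $f(z)=z\cdot(z+\lambda_0)/(\lambda_\infty z+1)$, I would first observe that when $|z|_v$ is sufficiently large relative to $\max(|\lambda_0|_v,\,|\lambda_\infty^{-1}|_v)$, the triangle inequality gives $|z+\lambda_0|_v \geq |z|_v\, e^{-\epsilon'_v}$ and $|\lambda_\infty z+1|_v \leq |\lambda_\infty z|_v\, e^{\epsilon'_v}$, with $\epsilon'_v=0$ at non-archimedean non-$2$-adic places and a bounded multiple of $\log 2$ elsewhere. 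Hence $|f(z)|_v \geq |z|_v\,|\lambda_\infty^{-1}|_v\, e^{-\epsilon_v}$ in this regime, for a suitable place-dependent $\epsilon_v$.

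To kick off the induction I would use the quadratic relation $\lambda_0\lambda_\infty w^2+2w+1=0$ to rewrite $\xi_1=-\lambda_0^2 w^2$ as $\lambda_0(2w+1)/\lambda_\infty$. The defining condition for $v\in S$, namely $\log|w|_v > \log^+|\lambda_0^{-1}|_v + C_v$, combined with the appropriate estimate on $|2w+1|_v$, readily gives $|\xi_1|_v \geq |\lambda_\infty^{-1}|_v\, e^{-\epsilon_v}$; a short case split on $|w|_v$ versus $1$ covers both the $|\lambda_0|_v\leq 1$ and $|\lambda_0|_v > 1$ regimes. This places $|\xi_1|_v$ above the threshold required for the single-step estimate, so induction on $n$ propagates the bound $|f^n(\xi_1)|_v \geq |\lambda_\infty^{-1}|_v^n\, e^{-n\epsilon_v}$ at places with $|\lambda_\infty|_v \leq 1$. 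At places $v\in S$ with $|\lambda_\infty|_v>1$ there is nothing further to prove, since $\log^+|\lambda_\infty^{-1}|_v=0$ and the Green function is non-negative.

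Applying Lemma~\ref{lem:quadgbound} with $z=f^k(\xi_1)$, and substituting the inductive bound $\log^+|f^k(\xi_1)|_v \geq k(\log^+|\lambda_\infty^{-1}|_v - \epsilon_v)$, then yields the claim. The main obstacle is calibrating $\epsilon_v$ to be place-dependent but $k$-independent: it must be small enough to render the conclusion useful, yet large enough that the ``sufficiently large'' hypothesis of the single-step estimate self-propagates through the induction. The constant $C_v$ in the definition of $S$ is engineered precisely at the $\log 2$ scale to absorb the archimedean and $2$-adic triangle-inequality losses, while at all other non-archimedean places the strong triangle inequality makes the one-step estimate exact with $\epsilon_v=0$.
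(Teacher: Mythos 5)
Your outline follows the same architecture as the paper's proof: reduce via Lemma~\ref{lem:quadgbound} to a lower bound on $\log^+|f_{\lambda_0,\lambda_\infty}^k(\xi_1)|_v$, obtain that bound by showing $\xi_1$ lies in the escape region at each $v\in S$, and verify this last condition by rewriting $\xi_1$ using $\lambda_0\lambda_\infty w^2+2w+1=0$ and invoking the definition of $S$ to control $|2+1/w|_v$ (with $C_v$ absorbing the archimedean and $2$-adic losses). Your base-case computation is essentially the one the paper carries out. The one structural difference is that the paper outsources the iteration to \cite[Lemma~21]{hcrit}, whereas you reprove it by explicit induction, and it is there that you have a genuine gap.

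Your single-step estimate gives $|f(z)|_v\geq |z|_v\,|\lambda_\infty^{-1}|_v\,e^{-\epsilon_v}$ only while $|z|_v$ stays above a threshold comparable to $\|\lambda_0,\lambda_\infty^{-1}\|_v$, and the threshold self-propagates only if $|\lambda_\infty^{-1}|_v e^{-\epsilon_v}\geq 1$. At an archimedean or $2$-adic place with $1\leq|\lambda_\infty^{-1}|_v<e^{\epsilon_v}$, the provable lower bound on $|f^n(\xi_1)|_v$ decays geometrically and eventually falls below the threshold, so the induction stalls. Your proposed remedy --- calibrating $\epsilon_v$ ``large enough that the hypothesis self-propagates'' --- cannot work with a $k$-independent $\epsilon_v$, because the triangle-inequality loss at such places is bounded below while $|\lambda_\infty^{-1}|_v$ may be arbitrarily close to $1$; and your dismissal of the case $|\lambda_\infty|_v>1$ covers only part of the problematic range (note also that non-negativity of the two-variable Green's function is not the right justification even there). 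The correct observation is that whenever $\log^+|\lambda_\infty^{-1}|_v\leq\epsilon_v$ the term $k(\log^+|\lambda_\infty^{-1}|_v-\epsilon_v)$ is non-positive, so the asserted inequality already follows from Lemma~\ref{lem:quadgbound} together with $\log^+|f^k(\xi_1)|_v\geq 0$, with no induction needed; the induction then only has to be run at places where $|\lambda_\infty^{-1}|_v e^{-\epsilon_v}>1$, and there it does propagate. With that case split inserted, your argument closes and amounts to a self-contained proof of the step the paper cites from \cite{hcrit}.
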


\begin{proof}
By the Lemma~\ref{lem:quadgbound}, it is enough to show that, in the case $v\in S$, we have
\[\log^+|f_{\lambda_0, \lambda_\infty}^k(\xi_1)|_v\geq k(\log^+|\lambda_\infty^{-1}|_v-\epsilon_v).\]
Furthermore, as in the proof of \cite[Lemma~21]{hcrit}, this follows if we can show that 
\[\log|\xi_1|_v> \log\|\lambda_0, \lambda_\infty^{-1}\|_v+\log^+|2|_v.\]
Note that $v\in S$ already implies
\[\left|\frac{w}{2+1/w}\right|_v\leq |w|_v\cdot\begin{cases} 2 & \text{if }v\text{ is   2-adic}\\ 1 & \text{otherwise}.\end{cases}\]
So it follows from $v\in S$ that 
\begin{eqnarray*}
\log|\xi_1|_v&=&2\log|\lambda_0|_v+2\log^+|w|_v\\
&> &2\log|\lambda_0|_v+\log^+|\lambda_0^{-1}|_v+C_v+\log^+\left|\frac{w}{2+1/w}\right|_v\\&& - \begin{cases}\log 2 & \text{if }v\text{ is  2-adic}\\ 0 & \text{otherwise}.\end{cases}\\
&\geq &  \log^+|\lambda_0|_v+\log\left\|\lambda_0, \frac{\lambda_0 w}{2+1/w}\right\|_v+\log^+|2|_v\\
&\geq &\log\|\lambda_0, \lambda_\infty^{-1}\|_v+\log^+|2|_v,
\end{eqnarray*}
since $\lambda_0w/(2+1/w)=-\lambda_\infty^{-1}$.
\end{proof}

Drawing together what we have so far, note that we may use the previous lemma at places $v\in M_K$ and the trivial bound $\log^+|z|\geq 0$ to obtain for any $k\geq 1$
\begin{multline}\label{eq:quadk}
2^k\hat{h}_{f_{\lambda_0, \lambda_\infty}}(\xi_1)\geq \sum_{v\in S}\frac{[K_v:\QQ_v]}{[K:\QQ]}k\log^+|\lambda_\infty^{-1}|_v\\- k\log 12 - 2h(\lambda_0)-2h(\lambda_\infty)-\log 2.
\end{multline}

It remains to determine the  extent to which the height of $\lambda_\infty$ is supported by places in $S$.
\begin{Lemma}\label{lem:halfheight}
With $S$ defined as in~\eqref{eq:Sdefff}, we have
\[\sum_{v\in S}\frac{[K_v:\QQ_v]}{[K:\QQ]}\log^+|\lambda_\infty^{-1}|_v\geq \frac{1}{2}h(\lambda_\infty)-\frac{5}{2}h(\lambda_0)-\frac{7}{2}\log 2.\]
\end{Lemma}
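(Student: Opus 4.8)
The plan is to bound the complementary sum $\sum_{v \notin S} n_v \log^+|\lambda_\infty^{-1}|_v$ (writing $n_v = [K_v:\QQ_v]/[K:\QQ]$ as in the surrounding arguments) and then subtract from $h(\lambda_\infty^{-1}) = h(\lambda_\infty)$ via the product formula. For $v \notin S$ we have $\log|w|_v \leq \log^+|\lambda_0^{-1}|_v + C_v$, which lets us estimate $|w|_v$ from above, and then — using the defining relation $\lambda_0\lambda_\infty w^2 + 2w + 1 = 0$, equivalently $\lambda_\infty^{-1} = -\lambda_0 w/(2 + 1/w)$ — we can control $|\lambda_\infty^{-1}|_v$ from above at places outside $S$ in terms of $|\lambda_0|_v$, $|w|_v$, and the $2$-adic/archimedean correction. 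This is exactly the kind of triangle-inequality bookkeeping already carried out in the preceding lemma, just run in the reverse direction.

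First I would record, for $v \notin S$, the inequality $\log^+|w|_v \leq \log^+|\lambda_0^{-1}|_v + C_v$ (using $C_v \geq 0$ to absorb the $\log^+$), and separately note the elementary bound $|2 + 1/w|_v^{-1} \leq \max\{1, |2|_v^{-1}\} \cdot (\text{something controlled})$ needed to pass from $\lambda_\infty^{-1}$ back to $\lambda_0 w$; the cleanest route is probably to bound $\log^+|\lambda_\infty^{-1}|_v = \log^+|\lambda_0 w/(2+1/w)|_v$ directly. Then $\log^+|\lambda_\infty^{-1}|_v \leq \log^+|\lambda_0|_v + \log^+|w|_v + C_v' \leq \log^+|\lambda_0|_v + \log^+|\lambda_0^{-1}|_v + C_v + C_v'$ for suitable $2$-adic/archimedean corrections $C_v'$. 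Summing over $v \notin S$ and using $\sum_v n_v \log^+|\lambda_0|_v = h(\lambda_0)$, $\sum_v n_v \log^+|\lambda_0^{-1}|_v = h(\lambda_0)$, and $\sum_v n_v(C_v + C_v') \leq (\text{const})\log 2$ (since these corrections are supported on archimedean and $2$-adic places, whose weighted count is bounded), I would get
\[
\sum_{v \notin S} n_v \log^+|\lambda_\infty^{-1}|_v \leq 2h(\lambda_0) + (\text{const})\log 2.
\]
Then
\[
\sum_{v \in S} n_v \log^+|\lambda_\infty^{-1}|_v = h(\lambda_\infty) - \sum_{v \notin S} n_v \log^+|\lambda_\infty^{-1}|_v \geq h(\lambda_\infty) - 2h(\lambda_0) - (\text{const})\log 2,
\]
and the claimed bound with coefficient $\tfrac12$ on $h(\lambda_\infty)$ would follow if in fact I only get to use half the height — which signals that the naive estimate above is slightly too lossy and one must be more careful.

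The main obstacle, and the reason the stated bound has $\tfrac12 h(\lambda_\infty)$ rather than $h(\lambda_\infty)$, is that the passage from $\log^+|\lambda_\infty^{-1}|_v$ back to $\log^+|w|_v$ cannot be one-to-one at every place: at places where $|w|_v$ is large the relation $\lambda_0\lambda_\infty w^2 \approx -2w$, i.e. $w \approx -2/(\lambda_0\lambda_\infty)$, ties $|w|_v$ to $|\lambda_0\lambda_\infty|_v^{-1}$, so $|\lambda_\infty^{-1}|_v$ and $|\lambda_0 w|_v$ interact through $|\lambda_0|_v$ and one loses a factor. The right approach is therefore to split the sum using the relation $|\lambda_\infty^{-1}|_v \asymp_v \min\{|\lambda_0 w|_v, |\lambda_0 \lambda_\infty|_v^{-1}|w|_v^{-1}\}$-type dichotomy coming from the quadratic, handle the two regimes separately, and feed in $h(w) \leq \tfrac12 h(\lambda_0\lambda_\infty) + O(\log 2)$ (itself a consequence of $w$ being a root of a degree-two polynomial with those coefficients). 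Balancing the two regimes is what produces the coefficient $\tfrac12$ and the $-\tfrac52 h(\lambda_0)$ term; getting the explicit constants $\tfrac12, \tfrac52, \tfrac72\log 2$ to come out exactly right is the only genuinely delicate bookkeeping, but it is the same flavor of computation as in Lemma~\ref{lem:Sweight} and in~\cite{hcrit}.
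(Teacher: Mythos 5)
Your first, fully executed computation contains a genuine error: the bound $\log^+|\lambda_\infty^{-1}|_v \leq \log^+|\lambda_0|_v+\log^+|w|_v+C_v'$ silently assumes a lower bound on $|2+1/w|_v$, equivalently on $|2w+1|_v$, and no such bound exists --- $2w+1$ can be $v$-adically tiny at places where $|w|_v$ is moderate, so such places contribute heavily to $\log^+|\lambda_\infty^{-1}|_v$ while being invisible to $|\lambda_0|_v$ and $|w|_v$. This also shows that your diagnosis of where the loss occurs is misplaced: for $v\notin S$ the size of $|w|_v$ is already controlled by the definition of $S$; the uncontrolled quantity is $|2w+1|_v^{-1}$, not $|w|_v$. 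The correct move is not to discard this factor but to keep it: writing $\lambda_\infty^{-1}=-\lambda_0 w^2/(2w+1)$ and applying the triangle inequality for $\log^+$ to the three factors gives $\sum_{v\notin S}n_v\log^+|\lambda_\infty^{-1}|_v \leq 3h(\lambda_0)+4\log 2+h(2w+1)$, where the $w^2$ factor is bounded using $v\notin S$ and the $(2w+1)^{-1}$ factor is simply bounded globally by $h(2w+1)$.

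The second ingredient you name --- that $w$ satisfies a quadratic over $\QQ(\lambda_0,\lambda_\infty)$, so its height is roughly half of $h(\lambda_0\lambda_\infty)$ --- is indeed the source of the coefficient $\frac12$, and the paper uses it in the equivalent form $h(\lambda_\infty)\geq 2h(2w+1)-h(\lambda_0)-3\log 2$, obtained by viewing $\lambda_\infty=-(2w+1)/(\lambda_0w^2)$ as a degree-two rational function of $2w+1$ with coefficients in $\QQ(\lambda_0)$. Substituting this into the bound above and subtracting from $h(\lambda_\infty^{-1})=h(\lambda_\infty)$ finishes the argument; no splitting into regimes and no min-dichotomy is needed, since the problematic factor is carried along as the single global term $h(2w+1)$. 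As written, your proposal executes only the erroneous estimate and leaves the correct route as an unverified (and somewhat miscalibrated) sketch, so it does not yet constitute a proof, even though it identifies the essential idea.
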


\begin{proof}
We have (writing $n_v=[K_v:\QQ_v]/[K:\QQ]$)
\begin{eqnarray*}
\sum_{v\not\in S} n_v\log^+|\lambda_{\infty}^{-1}|_v&=&\sum_{v\not\in S} n_v\log^+\left|\frac{\lambda_0 w^2}{2w+1}\right|_v\\
&\leq &  \sum_{v\not\in S} n_v\log^+|w^2|_v+ \sum_{v\not\in S} n_v\log^+|\lambda_0|_v+ \sum_{v\not\in S} n_v\log^+\left|\frac{1}{2w+1}\right|_v\\
&\leq & \sum_{v\not\in S}2(n_v \log^+|\lambda^{-1}_0|+C_v)+h(\lambda_0)+h(2w+1)\\
&\leq & 3h(\lambda_0)+4\log 2 + h(2w+1).
\end{eqnarray*}
Now, since $\lambda_\infty = \lambda_0 w^2/(2w+1)$, one can check that
\[h(\lambda_\infty)\geq 2h(1+2w)-h(\lambda_0)-3\log 2\]
(treating $\lambda_\infty$ as a quadratic rational function in $2w+1$), and so we have
\[\sum_{v\not\in S} n_v\log^+|\lambda_{\infty}^{-1}|_v\leq \frac{1}{2}h(\lambda_\infty) +\frac{7}{2}h(\lambda_0)+\frac{11}{2}\log 2,\]
from which the claim in the lemma follows.
\end{proof}

Combining Lemma~\ref{lem:halfheight} with inequality~\eqref{eq:quadk}, we have
\begin{multline*}
2^{k+1} \hat{h}_{f_{\lambda_0, \lambda_\infty}}(\zeta_i)\geq \left(\frac{k}{2}-2\right)h(\lambda_\infty)-\left(\frac{7k}{2}+2\right)h(\lambda_0)\\-\left(\frac{15k}{2}+1\right)\log 2-k\log 3
\end{multline*}
for all $k\geq 0$, so taking $k=3$, we have
\[\hat{h}_{f_{\lambda_0, \lambda_\infty}}(\zeta_i)\geq\frac{1}{32}h(\lambda_\infty)-\frac{25}{32}h(\lambda_0)-\frac{47}{32}\log 2-\frac{3}{16}\log 3.\]
This completes the proof of Lemma~\ref{th:quad2}.

For example, if $\lambda_0$ is a root of unity and $f_{\lambda_0, \lambda_\infty}$ fails to have both critical orbits infinite, then
\[h(\lambda_\infty)\leq 47\log 2+6\log 3 \approx 39.17\]
Unforunately, enumerating all $\lambda_\infty$ up to this height and algebraic degree 3 (if $f_{\lambda_0, \lambda_\infty}$ is conjugate to a function defined over $\QQ$), presents computational challenges.

As noted, Theorem~\ref{th:quad} follows from Lemma~\ref{th:quad2}, except that the latter says nothing about quadratic morphisms of the form $z+a+z^{-1}$. The next lemma, then, completes the proof of Theorem~\ref{th:quad}.

\begin{Lemma}
Let $f_a(z)=z+a+z^{-1}$ have a fixed point of multiplier $\lambda$. Then the critical points $\zeta_1$, $\zeta_2$ satisfy
\[\hat{h}_f(\zeta_i)\geq Ah(a)-B.\]
\end{Lemma}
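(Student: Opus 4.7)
The plan is to mimic the approach of Lemma~\ref{th:quad2}: lift $f_a$ to a homogeneous map on $\mathbb{A}^2$, analyze the associated $v$-adic Green's functions, and sum to produce a canonical-height bound. Take the lift $F(x,y) = (x^2 + axy + y^2,\, xy)$ of degree $2$. A direct Sylvester computation yields $\operatorname{Res}(F) = 1$, so $F$ defines a morphism $\mathbb{P}^1 \to \mathbb{P}^1$ with good reduction at every non-archimedean place $v$ where $|a|_v \leq 1$. The involution $z \mapsto -z$ conjugates $f_a$ to $f_{-a}$ and interchanges the critical points $\zeta_\pm = \pm 1$, so $\h_{f_a}(-1) = \h_{f_{-a}}(1)$ and it suffices to bound $\h_{f_a}(1)$ from below in terms of $h(a)$.

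Let $(X_n, Y_n) = F^n(1,1)$, so $X_{n+1} = X_n^2 + a X_n Y_n + Y_n^2$ and $Y_{n+1} = X_n Y_n$.  By induction $X_n \in \mathbb{Z}[a]$ is a polynomial in $a$ of degree $2^{n-1}$ with positive integer leading coefficient $A_n$, and the pair $(A_n, B_n)$ of leading coefficients satisfies $A_{n+1} = A_n(A_n + B_n)$, $B_{n+1} = A_n B_n$, with $A_1 = B_1 = 1$.  The key step is to show, at each place $v$, an estimate
\[G_v(1,1) \geq \tfrac{1}{2}\log^+|a|_v + \gamma_v,\]
where $G_v(x,y) = \lim_{n\to\infty} 2^{-n}\log\|F^n(x,y)\|_v$ is the $v$-adic Green's function associated to $F$ and $\gamma_v$ is a correction that will ultimately be controlled.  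At a non-archimedean $v$ with $|a|_v \leq 1$, good reduction gives $\|F^n(1,1)\|_v = 1$ and hence $G_v(1,1) = 0 = \tfrac{1}{2}\log^+|a|_v$, so $\gamma_v = 0$.  At a place with $|a|_v > 1$, an induction on $n$ tracking $v$-adic valuations through the recursion shows that the leading term $A_n a^{2^{n-1}}$ dominates $X_n$, yielding $\|F^n(1,1)\|_v = |A_n|_v\, |a|_v^{2^{n-1}}\bigl(1 + o(1)\bigr)$ and hence $G_v(1,1) = \tfrac{1}{2}\log|a|_v + \lim_n 2^{-n}\log|A_n|_v$; we set $\gamma_v = \lim_n 2^{-n}\log|A_n|_v$.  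A similar direct computation handles the archimedean places.

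Summing over $v$, the corrections assemble formally into $\lim_n 2^{-n}\sum_v n_v\log|A_n|_v$.  But $A_n$ is a nonzero rational integer, so the product formula gives $\sum_v n_v\log|A_n|_v = 0$ for every $n$, and the leading part of the global correction vanishes.  This yields
\[\h_{f_a}(1) = \sum_v n_v G_v(1,1) \geq \tfrac{1}{2}h(a) - B\]
for an absolute constant $B$ absorbing the bounded deviations from the asymptotic estimate, and the same bound (with $a$ replaced by $-a$, hence with the same $h(a)$) holds for $\zeta_-$ by the symmetry.  The main obstacle is the uniform control of these lower-order contributions: at places where $|a|_v$ is only slightly larger than $1$ the leading-term approximation needs care, and in the recursion $A_{n+1} = A_n(A_n + B_n)$ one encounters genuine $2$-adic cancellations.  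These are handled, as in the proof of Lemma~\ref{th:quad2}, by retaining explicit factors of $\log^+|2|_v$ in the inductive bounds and verifying that the resulting errors sum to a finite universal constant.
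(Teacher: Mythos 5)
Your strategy (explicit lift, local Green's functions, product formula) is genuinely different from the paper's, which disposes of this lemma in three lines: the critical points are $z=\pm 1$, one checks $\deg_a(f_a^n(\pm 1))=2^{n-1}$ so that the generic-fibre canonical height is $\tfrac12$, and then the Call--Silverman specialization theorem immediately gives $\hat{h}_{f_a}(\pm 1)\geq (\tfrac12-\epsilon)h(a)-C_\epsilon$, which suffices since the lemma only asks for \emph{some} positive $A$. Your setup is sound as far as it goes: the lift $F(x,y)=(x^2+axy+y^2,xy)$ does have resultant a unit, the symmetry $z\mapsto -z$ does reduce to the critical point $1$, and the recursion for the leading coefficients is correct.

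However, there is a genuine gap at the key step, and it is exactly the one you flag and then defer. The asserted estimate $\|F^n(1,1)\|_v=|A_n|_v\,|a|_v^{2^{n-1}}(1+o(1))$ at every place with $|a|_v>1$ is not justified and is doubtful precisely where it matters. Since $A_n=nB_n$ and $B_{n+1}=nB_n^2$, the integers $A_n$ accumulate $v$-adic valuation growing like a constant times $2^n$ at small residue characteristics; at a non-archimedean place with $|a|_v$ only slightly larger than $1$, the ``leading'' term $A_na^{2^{n-1}}$ of $X_n$ is then \emph{smaller} than lower-degree terms with unit coefficients, so the leading-term asymptotic fails and your $\gamma_v$ is not what you compute it to be. The product-formula cancellation $\sum_v n_v\log|A_n|_v=0$ only disposes of the leading coefficients; the residual errors (the ``$1+o(1)$'' factors and the places where domination fails) are each of size comparable to $\log^+|a|_v$, and a priori they sum to $O(h(a))$ rather than $O(1)$ --- which would swamp the main term entirely. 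Saying these are ``handled as in the proof of Lemma~\ref{th:quad2}'' conflates your argument with a structurally different one: that proof does not achieve the sharp leading constant with bounded error, but instead restricts to a set $S$ of places where the orbit demonstrably escapes, shows via a counting argument (Lemma~\ref{lem:halfheight}) that those places carry a definite fraction of the height, and accepts a degraded constant ($\tfrac{1}{32}$ rather than $\tfrac12$). Your proof can be repaired along exactly those lines --- restrict to places where $|a|_v$ exceeds a fixed threshold, use only those in the lower bound, and settle for a smaller $A$ --- or, more simply, replaced by the citation of Call--Silverman; but as written the central estimate is not established.
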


\begin{proof}
Note that the critical points of $f_a$ are $z=\pm 1$. It is straightforward to check that $\deg_a(f^n(\pm 1))=2^{n-1}$, and so on the generic fibre of the family, $\hat{h}_f(\pm 1)=\frac{1}{2}$.
By a result of Call and Silverman~\cite[Theorem~4.1]{call-silv}, we have (for any $\epsilon>0$)
\[\hat{h}_f(\pm 1)\geq \left(\frac{1}{2}-\epsilon\right)h(a)-C_\epsilon.\]
Taking $\epsilon<\frac{1}{2}$, and noting as above that $h_{\mathsf{M}_2}(f_a)\ll h(a)$, we have $\hat{h}_f(\pm 1)\gg h(f)$ in this family.
\end{proof}

\end{document}